\begin{document}

\theoremstyle{plain}

\newtheorem{thm}{Theorem}[section]
\newtheorem{lem}[thm]{Lemma}
\newtheorem{pro}[thm]{Proposition}
\newtheorem{hyp}[thm]{Hypotheses}
\newtheorem{cor}[thm]{Corollary}
\newtheorem{que}[thm]{Question}
\newtheorem{rem}[thm]{Remark}
\newtheorem{defi}[thm]{Definition}
\newtheorem{prob}[thm]{Problem}

\newtheorem*{thmMain}{Main Theorem}
\newtheorem*{thmA}{Theorem A}
\newtheorem*{thmB}{Theorem B}
\newtheorem*{thmC}{Theorem C}
\newtheorem*{probB}{Brauer's Problem 12}

\newtheorem*{thmAcl}{Main Theorem$^{*}$}
\newtheorem*{thmBcl}{Theorem B$^{*}$}

\newcommand{\Maxn}{\operatorname{Max_{\textbf{N}}}}
\newcommand{\Syl}{\operatorname{Syl}}
\newcommand{\dl}{\operatorname{dl}}
\newcommand{\Con}{\operatorname{Con}}
\newcommand{\cl}{\operatorname{cl}}
\newcommand{\Stab}{\operatorname{Stab}}
\newcommand{\Aut}{\operatorname{Aut}}
\newcommand{\Ker}{\operatorname{Ker}}
\newcommand{\fl}{\operatorname{fl}}
\newcommand{\Irr}{\operatorname{Irr}}
\newcommand{\SL}{\operatorname{SL}}
\newcommand{\FF}{\mathbb{F}}
\newcommand{\NN}{\mathbb{N}}
\newcommand{\N}{\mathbf{N}}
\newcommand{\C}{\mathbf{C}}
\newcommand{\OO}{\mathbf{O}}
\newcommand{\F}{\mathbf{F}}

\renewcommand{\labelenumi}{\upshape (\roman{enumi})}

\newcommand{\PSL}{\operatorname{PSL}}
\newcommand{\PSU}{\operatorname{PSU}}

\providecommand{\V}{\mathrm{V}}
\providecommand{\E}{\mathrm{E}}
\providecommand{\ir}{\mathrm{Irr_{rv}}}
\providecommand{\Irrr}{\mathrm{Irr_{rv}}}
\providecommand{\re}{\mathrm{Re}}

\def\Z{{\mathbb Z}}
\def\C{{\mathbb C}}
\def\Q{{\mathbb Q}}
\def\irr#1{{\rm Irr}(#1)}
\def\irrv#1{{\rm Irr}_{\rm rv}(#1)}
\def \c#1{{\mathcal #1}}
\def\cent#1#2{{\bf C}_{#1}(#2)}
\def\syl#1#2{{\rm Syl}_#1(#2)}
\def\nor{\triangleleft\,}
\def\oh#1#2{{\bf O}_{#1}(#2)}
\def\Oh#1#2{{\bf O}^{#1}(#2)}
\def\zent#1{{\bf Z}(#1)}
\def\det#1{{\rm det}(#1)}
\def\ker#1{{\rm ker}(#1)}
\def\norm#1#2{{\bf N}_{#1}(#2)}
\def\alt#1{{\rm Alt}(#1)}
\def\iitem#1{\goodbreak\par\noindent{\bf #1}}
   \def \mod#1{\, {\rm mod} \, #1 \, }
\def\sbs{\subseteq}

\def\gc{{\bf GC}}
\def\ngc{{non-{\bf GC}}}
\def\ngcs{{non-{\bf GC}$^*$}}
\newcommand{\notd}{{\!\not{|}}}
\newcommand{\Out}{{\mathrm {Out}}}
\newcommand{\Mult}{{\mathrm {Mult}}}
\newcommand{\Inn}{{\mathrm {Inn}}}
\newcommand{\IBR}{{\mathrm {IBr}}}
\newcommand{\IBRL}{{\mathrm {IBr}}_{\ell}}
\newcommand{\IBRP}{{\mathrm {IBr}}_{p}}
\newcommand{\ord}{{\mathrm {ord}}}
\def\id{\mathop{\mathrm{ id}}\nolimits}
\renewcommand{\Im}{{\mathrm {Im}}}
\newcommand{\Ind}{{\mathrm {Ind}}}
\newcommand{\diag}{{\mathrm {diag}}}
\newcommand{\soc}{{\mathrm {soc}}}
\newcommand{\End}{{\mathrm {End}}}
\newcommand{\sol}{{\mathrm {sol}}}
\newcommand{\Hom}{{\mathrm {Hom}}}
\newcommand{\Mor}{{\mathrm {Mor}}}
\newcommand{\Mat}{{\mathrm {Mat}}}
\def\rank{\mathop{\mathrm{ rank}}\nolimits}
\newcommand{\Tr}{{\mathrm {Tr}}}
\newcommand{\tr}{{\mathrm {tr}}}
\newcommand{\Gal}{{\it Gal}}
\newcommand{\Spec}{{\mathrm {Spec}}}
\newcommand{\ad}{{\mathrm {ad}}}
\newcommand{\Sym}{{\mathrm {Sym}}}
\newcommand{\Char}{{\mathrm {char}}}
\newcommand{\pr}{{\mathrm {pr}}}
\newcommand{\rad}{{\mathrm {rad}}}
\newcommand{\abel}{{\mathrm {abel}}}
\newcommand{\codim}{{\mathrm {codim}}}
\newcommand{\ind}{{\mathrm {ind}}}
\newcommand{\Res}{{\mathrm {Res}}}
\newcommand{\Ann}{{\mathrm {Ann}}}
\newcommand{\Ext}{{\mathrm {Ext}}}
\newcommand{\Alt}{{\mathrm {Alt}}}
\newcommand{\AAA}{{\sf A}}
\newcommand{\SSS}{{\sf S}}
\newcommand{\CC}{{\mathbb C}}
\newcommand{\CB}{{\mathbf C}}
\newcommand{\RR}{{\mathbb R}}
\newcommand{\QQ}{{\mathbb Q}}
\newcommand{\ZZ}{{\mathbb Z}}
\newcommand{\NB}{{\mathbf N}}
\newcommand{\OB}{{\mathbf O}}
\newcommand{\ZB}{{\mathbf Z}}
\newcommand{\EE}{{\mathbb E}}
\newcommand{\PP}{{\mathbb P}}
\newcommand{\GC}{{\mathcal G}}
\newcommand{\HC}{{\mathcal H}}
\newcommand{\GA}{{\mathfrak G}}
\newcommand{\TC}{{\mathcal T}}
\newcommand{\SC}{{\mathcal S}}
\newcommand{\RC}{{\mathcal R}}
\newcommand{\GCD}{\GC^{*}}
\newcommand{\TCD}{\TC^{*}}
\newcommand{\FD}{F^{*}}
\newcommand{\GD}{G^{*}}
\newcommand{\HD}{H^{*}}
\newcommand{\GCF}{\GC^{F}}
\newcommand{\TCF}{\TC^{F}}
\newcommand{\PCF}{\PC^{F}}
\newcommand{\GCDF}{(\GC^{*})^{F^{*}}}
\newcommand{\RGTT}{R^{\GC}_{\TC}(\theta)}
\newcommand{\RGTA}{R^{\GC}_{\TC}(1)}
\newcommand{\Om}{\Omega}
\newcommand{\eps}{\epsilon}
\newcommand{\al}{\alpha}
\newcommand{\chis}{\chi_{s}}
\newcommand{\sigmad}{\sigma^{*}}
\newcommand{\PA}{\boldsymbol{\alpha}}
\newcommand{\gam}{\gamma}
\newcommand{\lam}{\lambda}
\newcommand{\la}{\langle}
\newcommand{\ra}{\rangle}
\newcommand{\hs}{\hat{s}}
\newcommand{\htt}{\hat{t}}
\newcommand{\tn}{\hspace{0.5mm}^{t}\hspace*{-0.2mm}}
\newcommand{\ta}{\hspace{0.5mm}^{2}\hspace*{-0.2mm}}
\newcommand{\tb}{\hspace{0.5mm}^{3}\hspace*{-0.2mm}}
\def\skipa{\vspace{-1.5mm} & \vspace{-1.5mm} & \vspace{-1.5mm}\\}
\newcommand{\tw}[1]{{}^#1\!}
\renewcommand{\mod}{\bmod \,}

\marginparsep-0.5cm

\renewcommand{\thefootnote}{\fnsymbol{footnote}}
\footnotesep6.5pt

\title{Certain irreducible characters over a normal subgroup}

\author{Gabriel Navarro}
\address{Departament d'\`Algebra, Universitat de Val\`encia, 46100 Burjassot, Val\`encia, Spain}
\email{gabriel.navarro@uv.es}
\author{Noelia Rizo}
\address{Departament d'\`Algebra, Universitat de Val\`encia, 46100 Burjassot, Val\`encia, Spain}
\email{noelia.rizo@uv.es}
\thanks{The research of the first  author is supported by the Prometeo/Generalitat Valenciana,
and Proyecto
MTM2013-40464-P. The second author is supported by a Fellowship
FPU of Ministerio de Educaci\'on, Cultura y Deporte}

\keywords{Howlett-Isaacs theorem}

\subjclass[2010]{Primary 20D; Secondary 20C15}

\begin{abstract}
We extend the Howlett-Isaacs theorem
on the solvability of groups of central type
taking into account actions by automorphisms. Then we study
certain induced characters whose constituents have all the
same degree.
 \end{abstract}

\maketitle
\bigskip

\def\C{{\Bbb C}}
\def\qed{{~~~\vrule height .75em width .4em depth .2em}}
\def\irr#1{{\rm Irr}(#1)}
\def\cent#1#2{{\bf C}_{#1}(#2)}
\def\syl#1#2{{\rm Syl}_#1(#2)}
\def\nor{\triangleleft\,}
\def\zent#1{{\bf Z}(#1)}
\def\norm#1#2{{\bf N}_{#1}(#2)}
\def\oh#1#2{{\bf O}_{#1}(#2)}
\def\iitem#1{\goodbreak\par\noindent{\bf #1}}

\def\irrp#1{{\rm Irr}_{p'}(#1)}
  
  \section{Introduction}
  The celebrated Howlett-Isaacs [HI] theorem on groups of central type
  solved a conjecture proposed by Iwahori and Matsumoto in 1964:
  if $Z$ is a normal subgroup of a finite group $G$, $\lambda \in \irr Z$
  is a $G$-invariant complex irreducible character 
  of $Z$, and the induced character $\lambda^G$ is a multiple
  of a single $\chi \in \irr G$, then $G/Z$ is solvable.
  (In this case, it is said that $\lambda$ is {\bf fully ramified} in $G/Z$
  and that $G$ is of {\bf central type} if furthermore $Z=\zent G$.)
  This theorem, proved in 1982, is one of the first applications
  of the Classification of Finite Simple Groups to Representation
  Theory.   Fully ramified characters are essential
  in both Ordinary and Modular Representation Theory.
  \medskip
  
  Our first main result in this note is the following generalization.

  \begin{thmA}
  Suppose that $Z \nor G$, and let $\lambda \in \irr Z$.
  Assume that if $\chi, \psi \in \irr G$
  are irreducible constituents of the induced character $\lambda^G$,
  then there exists $a \in {\rm Aut}(G)$ stabilizing $Z$,
  such that $\chi^a=\psi$. 
  If $T$ is the stabilizer of $\lambda$ in $G$, then
  $T/Z$ is solvable.
  \end{thmA}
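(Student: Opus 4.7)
My plan is to reduce the statement to the classical Howlett-Isaacs theorem, first via a Clifford-theoretic reduction to the case where $\lambda$ is $G$-invariant, and then by passing to a semi-direct product with the relevant group of automorphisms.

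\textbf{Step 1 (Reduction to $G=T$).} Let $A = \{a \in \Aut(G) : Z^a = Z\}$ and set $\Gamma := A_\lambda$, the stabilizer of $\lambda$ in $A$. I first claim $\Gamma$ already acts transitively on $\irr{G|\lambda}$: given $a \in A$ with $\chi^a = \psi$ for two constituents $\chi,\psi$ of $\lambda^G$, the character $\lambda^a$ lies under $\psi$, so $\lambda^a = \lambda^g$ for some $g \in G$, and composing $a$ with the inner automorphism of $G$ induced by $g^{-1}$ yields an element of $\Gamma$ still sending $\chi$ to $\psi$. Since $\Gamma$ fixes $\lambda$, it normalizes $T = G_\lambda$, and the Clifford correspondence $\irr{T|\lambda} \to \irr{G|\lambda}$, $\psi \mapsto \psi^G$, is $\Gamma$-equivariant. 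Hence $\Gamma$ acts transitively on $\irr{T|\lambda}$ as well, and replacing $(G, Z, \lambda)$ by $(T, Z, \lambda)$ I may assume $\lambda$ is $G$-invariant; the task becomes to show $G/Z$ is solvable.

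\textbf{Step 2 (Fully ramified case via $\hat G$).} Form the semi-direct product $\hat G = G \rtimes \Gamma$, so that $Z \nor \hat G$, $\lambda$ is $\hat G$-invariant, and $\irr{G|\lambda}$ is a single $\hat G$-orbit. Fix $\chi \in \irr{G|\lambda}$ with stabilizer $\Gamma_\chi \leq \Gamma$; the inertia subgroup of $\chi$ in $\hat G$ is $\hat G_\chi = G\Gamma_\chi$. By Clifford-Gallagher theory, $\irr{\hat G|\lambda}$ is parameterized by the irreducible projective characters of $\Gamma_\chi$ with respect to a 2-cocycle $\beta$ measuring the obstruction to extending $\chi$ to $\hat G_\chi$. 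In the \emph{regular} case $\Gamma_\chi = 1$, this yields $|\irr{\hat G|\lambda}| = 1$, so $\lambda$ is fully ramified in $\hat G/Z$. The classical Howlett-Isaacs theorem applied to $(\hat G, Z, \lambda)$ then gives $\hat G/Z$ solvable, and hence $G/Z$ solvable since $G/Z \nor \hat G/Z$.

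\textbf{Main obstacle.} I expect the hard part to be the general case $\Gamma_\chi \neq 1$, in which $\lambda$ need not be fully ramified in $\hat G/Z$. I would handle this by induction on $|G|$, passing to the proper inertia subgroup $\hat G_\chi$ and analyzing the triple $(\hat G_\chi, G, \chi)$, combined with a projective variant of Howlett-Isaacs. The central technical difficulty is controlling the 2-cocycle $\beta$ on $\Gamma_\chi$---likely via a Schur-cover argument or a careful manipulation of character triples---together with verifying that enough $\Gamma$-transitivity is inherited by the sub-objects so that the inductive hypothesis applies.
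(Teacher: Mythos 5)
Your Step 1 is sound and coincides with the paper's own first reduction: pass to $T=G_\lambda$ via the (equivariant) Clifford correspondence and assume $\lambda$ is $G$-invariant. The problem is Step 2 and what follows. The case $\Gamma_\chi=1$ that you actually complete is a degenerate special case: only there does $|\irr{\hat G|\lambda}|=1$, so that $\lambda$ is fully ramified in $\hat G$ and classical Howlett--Isaacs applies. In general $\Gamma_\chi\ne 1$, $\lambda$ is not fully ramified in $\hat G$, and there is no formal reduction of the transitive-action hypothesis to the fully ramified one. The ``projective variant of Howlett--Isaacs'' you invoke to close the gap is essentially the theorem you are being asked to prove (it is Higgs's statement, which the paper points out was previously established only under solvability hypotheses, with a largely sketched proof); appealing to it is circular, and the inductive scheme you outline for $(\hat G_\chi, G,\chi)$ is not accompanied by any argument that the transitivity hypothesis passes to the relevant subquotients or that the cocycle $\beta$ can be controlled.

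What the paper actually does is rerun the entire Howlett--Isaacs induction under the weaker hypothesis that some group acts transitively on $\irr{N|\lambda}$ (Hypothesis 3.1). The key ingredients, none of which appear in your sketch, are: Lemma 3.2, which shows the transitivity hypothesis is inherited by normal subgroups $K$ with $Z\sbs K\sbs N$ and by inertia groups; a Sylow counting argument (Theorem 2.2) yielding transitivity of a Sylow subgroup of the acting group on $\irr{P|\lambda}$; the divisibility theorem (Theorem 3.3) ruling out primes dividing $|U|$ but not $|Z\cap U|$ for $U=N'$; the CFSG-dependent Theorem 2.1 of [HI] producing a prime $p$ with $p\nmid |M(X)|$ and no solvable subgroup of $X$ of $p$-power index; and, in the step showing ${\bf F}(N)$ equals the solvable radical, Turull's equivariant strengthening of the Dade--Puig theorem above the Glauberman correspondence, used to descend the hypothesis to $(\norm GQ,\norm NQ,\lambda)$. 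Your proposal identifies the right target but leaves all of this --- i.e., the substance of the proof --- as the acknowledged ``main obstacle.''
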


  In a different language of projective representations, 
   Theorem A
  was obtained  by R. J. Higgs under some solvability conditions [H].
  His proof  
  is mostly sketched, among other reasons
   because he uses some of the
  arguments
  in [HI] or [LY] (where, as a matter of fact,
   the Iwahori-Matsumoto conjecture was wrongly proven)
  or in some other papers by the author.
  Here, we choose to present a complete proof of Theorem A, in the language of
  character theory, and by doing so we
  shall adapt several arguments in all these papers. We
  would like to acknowledge this  now.
  
  \medskip
   Theorem A is only one case of a more general problem, which  
   seems intractable by now: if all the irreducible characters
  of $G$ over some $G$-invariant $\lambda \in \irr Z$ have the same degree, then $G/Z$ is solvable.
  (See Conjecture 11.1 of [N].)
  \medskip
  
  In the second main result of this note, we study this latter situation
  under some special hypothesis. 
  
  \begin{thmB}
  Suppose $G$ is $\pi$-separable and let
$N = \oh\pi G$. Let $\theta \in \irr N$ be $G$-invariant. Then
 all members of $\irr{G|\theta}$ have equal degrees if and only if
 $G/N$ is an abelian $\pi'$-group.
  \end{thmB}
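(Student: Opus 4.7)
The plan is to reduce both implications to Gallagher's theorem after first extending $\theta$ to a character $\hat\theta\in\irr G$. The crucial input is that a $G$-invariant $\theta\in\irr N$ extends to $G$ whenever $G$ is $\pi$-separable and $N=\oh\pi G$: since $N$ is a $\pi$-group, $\theta$ is automatically $\pi$-special in the sense of Gajendragadkar, and a $G$-invariant $\pi$-special character of a normal subgroup of a $\pi$-separable group has a canonical $\pi$-special extension. Once $\hat\theta$ is in hand, Gallagher's theorem yields the bijection
\[
\irr{G/N}\longrightarrow\irr{G|\theta},\qquad \beta\longmapsto\hat\theta\beta,
\]
with $(\hat\theta\beta)(1)=\theta(1)\beta(1)$, and both implications will be read off from this parametrization.

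For the ``if'' direction, suppose $G/N$ is an abelian $\pi'$-group. Here I do not even need the $\pi$-special machinery: $|N|$ and $|G:N|$ are coprime, so any $G$-invariant $\theta$ extends by the classical coprime extension theorem. Since $G/N$ is abelian, every $\beta\in\irr{G/N}$ is linear, so every character in $\irr{G|\theta}$ has degree exactly $\theta(1)$.

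For the ``only if'' direction, assume all members of $\irr{G|\theta}$ share a common degree. Under the Gallagher bijection this forces $\beta(1)$ to be constant over $\beta\in\irr{G/N}$, and since $1_{G/N}$ has degree one, each $\beta$ must be linear; hence $G/N$ is abelian. It remains to check that $G/N$ has no $\pi$-part. By the standard identity $\oh\pi(G/\oh\pi G)=1$ we have $\oh\pi(G/N)=1$, and in an abelian group each Sylow subgroup is normal, so any prime $p\in\pi$ dividing $|G/N|$ would produce a nontrivial normal $\pi$-subgroup of $G/N$, contradicting $\oh\pi(G/N)=1$. Thus $G/N$ is an abelian $\pi'$-group, as required.

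The principal subtlety, and hence the main obstacle I anticipate, is identifying and invoking the correct extension theorem for $\theta$ in the $\pi$-separable setting. Once the extension $\hat\theta$ is secured, the rest of the argument reduces to Gallagher's theorem and elementary facts about abelian groups and the operator $\oh\pi$.
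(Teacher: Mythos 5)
The ``if'' direction of your argument is correct and is exactly what the paper does: since $N=\oh\pi G$ is a $\pi$-group and $G/N$ is a $\pi'$-group, $\theta$ extends by the coprime extension theorem, and Gallagher finishes. The ``only if'' direction, however, rests on a false extension claim, and that claim is precisely where the entire difficulty of the theorem lives. It is \emph{not} true that a $G$-invariant $\pi$-special character of a normal subgroup of a $\pi$-separable group extends to $G$; the correct statement (Gajendragadkar, Isaacs) requires $G/N$ to be a $\pi'$-group --- which is the very conclusion you are trying to prove, so invoking it here would be circular. A character $\chi\in\irr G$ extends an invariant $\theta$ only if $\theta$ extends to every subgroup $P$ with $P/N\in\syl p{G/N}$, and for $p\in\pi$ this can fail. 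Concretely, take $\pi=\{2\}$ and $G=(C_5\times C_5)\rtimes Q_8$, where $Q_8$ acts through $Q_8/\zent{Q_8}\cong C_2\times C_2$ by inverting the two factors independently. Then $G$ is solvable, $N=\oh2G=\zent{Q_8}\cong C_2$ is central in $G$, and the faithful $\theta\in\irr N$ is $G$-invariant but fully ramified in the Sylow $2$-subgroup $Q_8$, hence has no extension to $G$. (Consistently with the theorem, $\irr{G|\theta}$ here contains characters of degrees $2$ and $4$.)

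Once the Gallagher bijection is unavailable, the whole ``only if'' direction has to be rebuilt, and this is why the paper's proof is long: it reduces to the case where the canonical extension $\hat\theta$ of $\theta$ to the preimage $U$ of $\oh{\pi'}{G/N}$ is fully ramified in $G/U$, then invokes the Howlett--Isaacs theorem (hence the Classification) to get $G/U$ solvable, Riese's subnormality criterion to get $G/U$ nilpotent, and the large-orbit theorems of Dolfi and Isaacs to reach a contradiction. None of this can be bypassed by an extension-plus-Gallagher argument. Your closing steps (linearity of all $\beta$ forces $G/N$ abelian, and $\oh\pi{G/N}=1$ forces $G/N$ to be a $\pi'$-group) are fine, but they sit downstream of the unjustified claim.
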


 As the reader will see, the proof of Theorem B uses a lot of deep machinery. The proof
 that we present here is an improvement by M. Isaacs
 of an earlier version which
 we reproduce it here with his kind permission.

  \section{Transitive Actions}

In general, we follow the notation in [Is].
If $G$ is a finite group, then $\irr G$ is the set of the irreducible
complex characters of $G$.
If $Z \nor G$ and $\lambda \in \irr Z$, then
$\irr{G|\lambda}$ is the set of the irreducible
constituents of the induced character $\lambda^G$.
By Frobenius reciprocity, this is the set of characters $\chi \in \irr G$
such that the restriction $\chi_Z$ contains $\lambda$.

\medskip

\begin{lem}\label{aux}
Suppose that $Z \nor G$, and let
$\lambda \in \irr Z$ be $G$-invariant. Assume that  all characters 
in $\irr{G|\lambda}$ have the same degree $d\lambda(1)$.  Let $P/Z \in \syl p{G/Z}$. Then $d_p \lambda(1)$ is the minimum of $\{ \delta(1) \, |\, \delta \in \irr{P|\lambda}\}$
and $|\irr{P|\lambda}| \le |\irr{G|\lambda}|_p$.
\end{lem}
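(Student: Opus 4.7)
The plan rests on two standard Clifford-theoretic identities. Since $\lambda$ is $G$-invariant, every $\chi \in \irr{G|\lambda}$ satisfies $\chi_Z = d\lambda$. Similarly, for each $\delta \in \irr{P|\lambda}$, Clifford gives $\delta_Z = e_\delta \lambda$ with $e_\delta = \delta(1)/\lambda(1)$; equating degrees in $\lambda^P = \sum_{\delta} e_\delta \delta$ yields the identity $[P:Z] = \sum_{\delta \in \irr{P|\lambda}} e_\delta^2$, and in particular each $e_\delta$ is a power of $p$.

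First I would establish the lower bound $d_p \mid e_\delta$ for every $\delta \in \irr{P|\lambda}$. By Frobenius reciprocity, $\delta^G = \sum_\chi c_\chi \chi$ is a nonnegative integral combination of members of $\irr{G|\lambda}$, each of degree $d\lambda(1)$; comparing degrees gives $e_\delta [G:P] = d \sum c_\chi$, so $d \mid e_\delta [G:P]$. As $[G:P]$ is coprime to $p$ and $e_\delta$ is a $p$-power, $d_p \mid e_\delta$.

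Next, the counting bound $|\irr{P|\lambda}| \le |\irr{G|\lambda}|_p$ falls out of this lower bound. On the one hand, $[P:Z] = \sum_\delta e_\delta^2 \ge d_p^2 \cdot |\irr{P|\lambda}|$; on the other, the analogous global identity $[G:Z] = \sum_{\chi \in \irr{G|\lambda}} d \cdot \chi(1) = d^2 |\irr{G|\lambda}|$ shows that $|\irr{G|\lambda}|_p = [G:Z]_p / d_p^2 = [P:Z]/d_p^2$, and the inequality follows.

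The main step, and the only subtle one, is to show that the minimum of the $e_\delta$ is actually attained at $d_p$. I would fix any $\chi \in \irr{G|\lambda}$ and decompose $\chi_P = \sum_\delta c_\delta \delta$; since $\lambda$ is $G$-invariant, every irreducible constituent lies in $\irr{P|\lambda}$. Equating degrees gives $d = \sum_\delta c_\delta e_\delta$, and dividing by $d_p$ yields
\[
 d_{p'} \;=\; \sum_\delta c_\delta \, (e_\delta/d_p).
\]
If every $\delta$ with $c_\delta > 0$ had $e_\delta > d_p$, then each ratio $e_\delta/d_p$ would be a $p$-power strictly greater than $1$ and hence divisible by $p$, forcing $p \mid d_{p'}$, a contradiction. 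So $\chi_P$ must contain some $\delta$ with $e_\delta = d_p$, and the minimum of $\{\delta(1) : \delta \in \irr{P|\lambda}\}$ is precisely $d_p\lambda(1)$. I expect the congruence argument in this final step to be the only non-mechanical point; the first two steps amount to keeping careful track of which factors are $p$-parts and which are $p'$-parts.
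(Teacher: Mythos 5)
Your proof is correct and follows essentially the same route as the paper's: the paper phrases the key step via the greatest common divisor of the degrees $\delta(1)$ (after first reducing to $\lambda$ linear by a character triple isomorphism), but the divisibility computations in both directions and the final counting are the same as yours. The one loose point is your parenthetical ``in particular each $e_\delta$ is a power of $p$'': that does \emph{not} follow from the identity $[P:Z]=\sum_\delta e_\delta^2$ (a sum of squares can be a $p$-power without each term being one); the correct justification is that $e_\delta=\delta(1)/\lambda(1)$ divides $|P:Z|$ for the invariant character $\lambda$ (Corollary 11.29 of [Is]), and you genuinely need this fact in your final congruence step, where each ratio $e_\delta/d_p>1$ must be divisible by $p$.
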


\medskip

\begin{proof}
By Character Triple Isomorphisms
 (see Chapter 11 of [Is]),
 we may assume that $\lambda(1)=1$.
Write $\irr{G\, |\, \lambda} = \{\chi_j \mid 1 \le j \le s\}$, and
observe
that the multiplicity of $\chi_j$ in $\lambda^G$ is
$\chi_j(1)$.  Since by
hypothesis, all of the degrees $\chi_j(1)$ are equal, we can write
$\lambda^G = d\sum_j\chi_j$, where $d = \chi_j(1)$ for all $j$.
Also, we have $sd^2 = |G:Z|$.
 Write
$\irr{P\, |\, \lambda} = \{\delta_i \mid 1 \le i \le t\}$, and observe
that because $\lambda(1) = 1$, we have
$\lambda^P = \sum_i d_i\delta_i$, where $d_i = \delta_i(1)$ and
$\sum {d_i}^2 = |P:Z|$. We can write
$$
\delta_i^G = \sum_{j=1}^s d_{ij}\chi_j \,,
$$
and it follows that $\delta_i^G(1)$ is a multiple of the common
degree $d$ of the $\chi_j$. Then $d$ divides $|G:P|d_i$, and
hence the $p$-part $d_p$ of $d$ divides $d_i$ for all $i$. We
conclude that $d_p$ divides the greatest common divisor $e$ of
the $d_i$.

We also have that
$$
(\chi_j)_P = \sum_{i=1}^t d_{ij}\delta_i
$$
by Frobenius reciprocity, and thus $e$ divides $\chi_j(1) = d$.
Since $e$ is a $p$-power, we see that $e$ divides $d_p$, and
thus $e = d_p$. Then we have that
$$
|P:Z| = \sum_{i=1}^{t} {d_i}^2 \ge e^2t  = {(d_p)}^2t \,.
$$ 
Taking $p$-parts in  $sd^2 = |G:Z|$, we obtain that $s_p \ge t$. 
\end{proof}
\medskip
The following is a character-theoretical version
of Theorem 1.2 of [H].

\medskip

\begin{thm}\label{actrans}
Suppose that $Z\nor  G$, $\lambda \in \irr Z$ is $G$-invariant, $p$ is a prime
and $P/Z \in \syl p{G/Z}$. Let ${\mathcal A}=\irr{G|\lambda}$ and ${\mathcal B}=\irr{P|\lambda}$.
Suppose that $A$ is a finite group acting on $\mathcal A$ and $\mathcal B$ in such a way that 
$$[(\chi^a)_P, \delta^a]= [\chi_P, \delta]$$
for all $\chi \in \mathcal A$, $\delta \in \mathcal B$ and $a \in A$.
Assume further that $\chi^a(1)=\chi(1)$ for $\chi \in \mathcal A$ and $a \in A$.
Let $B \in \syl pA$.
If $A$ acts transitively on $\mathcal A$, then $B$ acts transitively on $\mathcal B$ and $|{\mathcal A}|_p=|{\mathcal B}|$.
\end{thm}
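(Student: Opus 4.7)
The plan is to combine Lemma \ref{aux} with the $A$-invariance of certain character sums, in order to extract strong divisibility constraints that pin down all degrees in $\mathcal{B}$ exactly; transitivity will then follow almost mechanically.

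By transitivity of $A$ on $\mathcal{A}$ and the degree-preservation hypothesis, every $\chi\in\mathcal{A}$ has a common degree $d\lambda(1)$, so Lemma \ref{aux} gives $|\mathcal{B}|\le |\mathcal{A}|_p =: p^{a}$ with $d_p\lambda(1) = \min\{\delta(1): \delta \in \mathcal{B}\}$. Inspection of that lemma's proof shows that $d_p\lambda(1)$ actually divides every $\delta(1)$; writing $\delta(1)=d_p f_\delta\lambda(1)$ with integers $f_\delta\ge 1$ and using $\sum_\delta \delta(1)^2 = [P:Z]\lambda(1)^2$ (from the decomposition of $\lambda^P$), one obtains $\sum_\delta f_\delta^{\,2} = [P:Z]/d_p^{\,2} = p^{a}$.

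The key observation is that for every $A$-invariant subset $\mathcal{O}\subseteq\mathcal{B}$, the function $\chi\mapsto c_{\mathcal{O}}(\chi):=\sum_{\delta\in\mathcal{O}}[\chi_P,\delta]$ is $A$-invariant in $\chi$: for each $a\in A$, the inner-product hypothesis gives
\[
\sum_{\delta\in\mathcal{O}}[(\chi^{a})_P,\delta] \;=\; \sum_{\delta\in\mathcal{O}}[\chi_P,\delta^{a^{-1}}] \;=\; \sum_{\delta'\in\mathcal{O}^{a^{-1}}}[\chi_P,\delta'] \;=\; c_{\mathcal{O}}(\chi),
\]
since $\mathcal{O}^{a^{-1}}=\mathcal{O}$. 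Transitivity of $A$ on $\mathcal{A}$ then forces $c_{\mathcal{O}}$ to be a single positive integer. Applying this with $\mathcal{O}=\mathcal{B}$ and computing $s\cdot c_{\mathcal{B}} = \sum_{\chi,\delta}[\chi_P,\delta]$ via the dual identity $\sum_\chi[\chi_P,\delta] = [G:P]\delta(1)/(d\lambda(1))$ and the relation $sd^{2} = [G:Z]$, one gets $c_{\mathcal{B}} = d_{p'}\bigl(\sum_\delta f_\delta\bigr)/p^{a}$. Since $c_{\mathcal{B}}\in\ZZ$ is positive and $\gcd(p,d_{p'})=1$, we conclude $p^{a}\mid\sum_\delta f_\delta$; on the other hand, $d\lambda(1) = \chi(1) = \sum_\delta[\chi_P,\delta]\delta(1) \ge c_{\mathcal{B}}\,d_p\lambda(1)$ yields $c_{\mathcal{B}}\le d_{p'}$, hence $\sum_\delta f_\delta \le p^{a}$. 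Together these pinch $\sum_\delta f_\delta = p^{a}$, which combined with $\sum_\delta f_\delta^{\,2} = p^{a}$ and $f_\delta\ge 1$ forces $f_\delta = 1$ for every $\delta\in\mathcal{B}$. Hence every character in $\mathcal{B}$ has degree $d_p\lambda(1)$ and $|\mathcal{B}| = p^{a} = |\mathcal{A}|_p$.

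To conclude, I would reapply the $c_{\mathcal{O}}$ identity to each $A$-orbit $\mathcal{O}$ on $\mathcal{B}$ individually: with $f_\delta\equiv 1$ established, the same calculation gives $c_{\mathcal{O}} = d_{p'}|\mathcal{O}|/p^{a}\in\ZZ_{>0}$, so $p^{a}\mid|\mathcal{O}|$, and $|\mathcal{O}|\le|\mathcal{B}|=p^{a}$ forces $\mathcal{O}=\mathcal{B}$. Thus $A$ is transitive on $\mathcal{B}$. Since $|\mathcal{B}|=p^{a}$ is a $p$-power and $B\in\syl p A$, the index $[A : B\,\Stab_{A}(\delta)]$ divides both $[A:B]$ (a $p'$-number) and $[A:\Stab_{A}(\delta)]=p^{a}$, so $B\,\Stab_{A}(\delta)=A$ and $B$ too is transitive on $\mathcal{B}$. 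I expect the main obstacle to be the two-sided divisibility pinch in the third paragraph: one must simultaneously exploit the exact equation $c_{\mathcal{B}} = d_{p'}\sum f_\delta/p^{a}$ (coming from $A$-symmetry), the elementary bound $c_{\mathcal{B}}\le d_{p'}$, and the square-sum identity $\sum f_\delta^{\,2}=p^{a}$ to force $f_\delta\equiv 1$; once this rigidity is in place, transitivity is routine.
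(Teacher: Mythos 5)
Your argument is correct, and it reaches the conclusion by a genuinely different route from the paper. The paper's proof, after invoking Lemma \ref{aux}, fixes a single $\delta_j\in\mathcal{B}$ of minimal degree $d_p\lambda(1)$, lets $S=B_{\delta_j}$ act on the constituents of $\delta_j^G$, and extracts an $S$-fixed constituent $\psi_k$ from the coprimality of $\sum_k b_k|\mathcal{O}_k|=|G:P|/d_{p'}$ to $p$; the chain $s_p\le |B:B_{\psi_k}|\cdot(\dots)\le|B:B_{\delta_j}|\le t\le s_p$ then collapses to give both conclusions. You instead run a global averaging argument: the $A$-invariance of $c_{\mathcal O}(\chi)=\sum_{\delta\in\mathcal O}[\chi_P,\delta]$ for $A$-stable $\mathcal O$, combined with the two Frobenius-reciprocity identities $\sum_{\chi}[\chi_P,\delta]=|G:P|\delta(1)/(d\lambda(1))$ and $\sum_\delta\delta(1)^2=|P:Z|\lambda(1)^2$, pinches $\sum_\delta f_\delta=\sum_\delta f_\delta^2=p^a$ and hence $f_\delta\equiv 1$. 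Your version proves strictly more than the statement: every character in $\irr{P|\lambda}$ has degree exactly $d_p\lambda(1)$ (the paper only ever uses the minimum), and every $A$-orbit on $\mathcal{B}$ has size divisible by $p^a$; the price is that you need the divisibility $d_p\mid \delta(1)/\lambda(1)$, which is indeed available from the proof (not just the statement) of Lemma \ref{aux}, as you note. All the intermediate identities check out ($c_{\mathcal B}=d_{p'}\sum_\delta f_\delta/p^a$, $c_{\mathcal B}\le d_{p'}$, positivity of $c_{\mathcal O}$ for an orbit $\mathcal O$ since some $\chi\in\mathcal A$ lies over each $\delta$). One cosmetic caution in the last step: $B\,\Stab_A(\delta)$ need not be a subgroup, so "the index $[A:B\,\Stab_A(\delta)]$ divides" is not literally meaningful; the clean version is that $[A:\Stab_A(\delta)]=p^a$ forces $|\Stab_A(\delta)|_{p'}=|A|_{p'}$, whence $|B\,\Stab_A(\delta)|=|B|\,|\Stab_A(\delta)|/|B\cap\Stab_A(\delta)|\ge |A|$, so $A=B\,\Stab_A(\delta)$ and the $B$-orbit of $\delta$ is all of $\mathcal{B}$. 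With that standard repair, the proof is complete.
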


\medskip

\begin{proof}
Write ${\mathcal A}= \{ \chi_1, \ldots, \chi_ s\}$ and ${\mathcal B}=\{ \delta_1, \ldots, \delta_t\}$.
By hypothesis, we have that
all the  characters in $\mathcal A$ have the same degree $d \lambda(1)$. Hence
$$|G:Z|= d^2s  \, .$$
By Lemma \ref{aux}, we have that  $d_p\lambda(1)$ is the minimum of the degrees in $\mathcal B$ and that $t \le s_p$.
Write
$$(\chi_i)_P= \sum_{j=1}^t d_{ij} \delta_ j \, $$
so that
 $$(\delta_j)^G= \sum_{i=1}^s d_{ij} \chi_i$$ by Frobenius reciprocity.
Let $B$ be a Sylow $p$-subgroup of $A$. 
Let $\delta_j$ be such that $\delta_j(1)=d_p\lambda(1)$.

Now, let $S=B_{\delta_j}$ be the stabilizer of $\delta_j$ in $B$.
We have that $S$ acts on 
the set $\irr{G|\delta_j}$ of irreducible
constituents of $\delta_j^G$. Let ${\mathcal O}_1, \ldots, {\mathcal O}_r$ be the set of $S$-orbits.
Let $\psi_i \in {\mathcal O}_i$. We may write
$$(\delta_j)^G= \sum_{k=1}^r  b_k (\sum_{ \xi \in {\mathcal O}_k} \xi) \, . $$
Hence
$$|G:P|d_p\lambda(1)= \sum_{k=1}^r b_k|{\mathcal O}_k|\psi_k(1)= d \lambda(1)\sum_{k=1}^r b_k|{\mathcal O}_k|$$
and therefore $p$ does not divide
$$\sum_{k=1}^r b_k|{\mathcal O}_k| \,.$$
Therefore there is $k$ such that $b_k  |{\mathcal O}_k|$ is not divisible by $p$.
In particular, we see that there is an irreducible constituent $\psi_k$ of $(\delta_j)^G$ that
is $S$-fixed. Hence $S=B_{\delta_j} \sbs B_{\psi_k} \sbs B$. Also $B_{\psi_k} \sbs R \in \syl p{A_{\psi_k}}$ for some Sylow
$p$-subgroup $R$ of $A_{\psi_k}$.
Since $A$ acts transitively on $\irr{G|\lambda}$, we have that
$s=|A: A_{\psi_k}|$. Thus
$$s_p=|A|_p/|A_{\psi_k}|_p=|A|_p/|R| = |B|/|R| \le |B|/|B_{\psi_k}|
\le |B|/|B_{\delta_j}|=|B:B_{\delta_j}| \le t \le s_p \, .$$
Thus $t=s_p$, $|B:B_{\delta_j}|=t$ and everything follows. 
\end{proof}

\medskip
 
\section{Auxiliary results}
 
\medskip  
 Of course, if $A$ acts by automorphisms on $G$, then
 $A$ also acts on $\irr G$. If $\chi \in \irr G$ and $a \in A$,
 then $\chi^a \in \irr G$ is the unique character satisfying that
 $\chi^a(g^a)=\chi(g)$ for $g \in G$.  
 \medskip

\begin{hyp}\label{hyp}
Suppose that $Z \sbs N \nor G$, where $Z\nor G$.
Let $\lambda \in \irr Z$.
 Suppose that 
if $\tau_i \in \irr{N|\lambda}$ for $i=1,2$, then there exists
$g \in G$
 such that $\tau_1^{g}=\tau_2$.
 \end{hyp}
  
  \medskip
  
  We say in this case that $(G,N, \lambda)$ satisfies
  Hypothesis \ref{hyp}. 
  \medskip

 If $N \nor G$ and $\tau \in \irr N$, then 
 we denote by $I_G(\tau)$, or by $G_\tau$, the stabilizer
 of $\tau$ in $G$. 

\medskip  
 Recall that induction defines a bijection
 $$\irr{I_G(\theta)|\theta} \rightarrow \irr{G|\theta}$$
 by the Clifford correspondence (Theorem (6.11) of [Is]).
\medskip

\begin{lem}\label{propert}
Suppose that $(G,N, \lambda)$ satisfies Hypotheses \ref{hyp}.
Let $Z \sbs K \sbs N$, where $K \nor G$.
Then the following hold.
\begin{enumerate}[(a)]

\item
Let $\tau_i \in \irr{K|\lambda}$ for $i=1,2$. Then there exists
$g \in G $
 such that $\tau_1^g=\tau_2$.
 
\item
Suppose that $L \nor G$ is contained in $K$. Let $\epsilon \in \irr L$.
Suppose that $\gamma_i \in \irr{I_K(\epsilon) |\epsilon}$ are such that
$(\gamma_i)^K$ lie over $\lambda$ for $i=1,2$.
Then there is $g \in I_G(\epsilon) $
such that $\gamma_1^g= \gamma_2$.

\item
Let $\tau \in \irr{K|\lambda}$. 
Let $\gamma_i \in \irr{I_N(\tau)|\tau}$
 for $i=1,2$.
 Then there exists $g \in I_G(\tau)$ such that $\gamma_1^g=\gamma_2$. 
 \end{enumerate}
 \end{lem}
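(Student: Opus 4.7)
The plan is to reduce each part to Hypothesis \ref{hyp} by combining Clifford's theorem with the Clifford correspondence. I shall use without comment that ``lies over'' is transitive: if $\alpha$ lies over $\beta$ and $\beta$ lies over $\gamma$, then $\alpha$ lies over $\gamma$. Note also that $K \nor N$, since $K \nor G$ and $K \sbs N$.

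For part (a), pick any $\psi_i \in \irr{N|\tau_i}$ for $i=1,2$. Each $\psi_i$ lies over $\lambda$, so Hypothesis \ref{hyp} supplies $g \in G$ with $\psi_1^g = \psi_2$. Now $\tau_1^g$ and $\tau_2$ are both constituents of $(\psi_2)_K$, and applying Clifford's theorem to $K \nor N$ yields $n \in N$ with $(\tau_1^g)^n = \tau_2$. Thus $gn$ is the required conjugator.

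For part (b), set $\tau_i := (\gamma_i)^K$. By the Clifford correspondence for $L \nor K$ each $\tau_i$ is irreducible, and it lies over $\lambda$ by hypothesis. Part (a) then yields $g \in G$ with $\tau_1^g = \tau_2$. The character $\gamma_1^g$ lies over $\epsilon^g$ and induces to $\tau_2$, so both $\epsilon$ and $\epsilon^g$ lie under $\tau_2$; Clifford in $K$ then gives $k \in K$ with $\epsilon^{gk} = \epsilon$, so that $gk \in I_G(\epsilon)$. Since $gk$ normalises $K$ and stabilises $\epsilon$, it also normalises $I_K(\epsilon) = I_G(\epsilon) \cap K$, whence $\gamma_1^{gk} \in \irr{I_K(\epsilon)|\epsilon}$. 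Both $\gamma_1^{gk}$ and $\gamma_2$ then induce to $\tau_2^k = \tau_2$, and injectivity of the Clifford correspondence forces $\gamma_1^{gk} = \gamma_2$. Part (c) runs in exact parallel with $(N, \tau)$ playing the roles of $(K, \epsilon)$: setting $\tau_i := (\gamma_i)^N \in \irr{N|\lambda}$ and applying Hypothesis \ref{hyp} directly gives $g \in G$ with $\tau_1^g = \tau_2$; since $\tau$ and $\tau^g$ both lie under $\tau_2$, Clifford in $N$ furnishes $n \in N$ with $\tau^{gn} = \tau$, placing $gn$ in $I_G(\tau)$, and the Clifford correspondence for $K \nor N$ (both $\gamma_1^{gn}$ and $\gamma_2$ induce to $\tau_2$) finishes the argument.

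I do not anticipate any serious obstacle. The main care needed is bookkeeping: after Hypothesis \ref{hyp} delivers a $G$-conjugacy, one must further adjust inside $N$ (respectively $K$) to land in the required inertia group, while verifying that the adjusted element still normalises the relevant inertia subgroup so that the Clifford correspondence can be applied.
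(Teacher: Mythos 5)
Your argument is correct and is essentially identical to the paper's: in each part you first pass to characters of $N$ (or use part (a)) to invoke Hypothesis \ref{hyp}, then adjust the conjugating element inside $N$ or $K$ via Clifford's theorem to land in the right inertia group, and finish with the uniqueness in the Clifford correspondence. No issues.
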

 
 \medskip
 
\begin{proof}

(a)~~Let $\gamma_i \in \irr N$ over $\tau_i$.
 By hypothesis, we have that
  $\gamma_1^{x}=\gamma_2$ for some $x \in G$.
 We have that $\tau_1^{x}$ and $\tau_2$ are under
 $\gamma_2$, so by Clifford's theorem there is $n \in N$ such that 
 $\tau_1^{xn}=\tau_2$. Set $g=xn$.

(b)~~By part (a), there is $g \in G $
such that $((\gamma_1)^K)^{g}=(\gamma_2)^K$.
Now, $\epsilon^{g}$ and $\epsilon$ are under
$(\gamma_2)^K$, so by replacing $g$ by $gk$ for some $k \in K$,
we may assume also that $\epsilon^{g}=\epsilon$.
Then $g \in I_G(\epsilon)$. By the uniqueness in the Clifford
correspondence,
 we deduce that $(\gamma_1)^{g}=\gamma_2$.

 (c)~~By the Clifford correspondence, we have that $(\gamma_i)^N \in \irr{N}$ lie over $\lambda$.
 By hypotheses, there
 is $g \in G$ 
  such that $((\gamma_1)^N)^{g}=
 (\gamma_2)^N$. Now, $\tau^{g}$ and $\tau$
 are $N$-conjugate by Clifford's theorem, so by
 replacing $g$ by $gn$, for some $n \in N$,
 we may assume that $\tau^{g}=\tau$.
 Notice now that  $g \in I_G(\tau)$.
 Also, $\gamma_1^{g}=\gamma_2$,
 by the uniqueness in the Clifford correspondence.
 \end{proof}
 
 \begin{thm}\label{div}
 Assume Hypotheses \ref{hyp}, with $Z \sbs \zent N$.
 Let $U \sbs N$, with $U \nor G$. 
 Suppose that $q$ is a prime dividing $|U|$. Then $q$ divides
 $|Z \cap U|$.
 \end{thm}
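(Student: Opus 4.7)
The plan is to argue by contradiction: assume $q$ divides $|U|$ but not $|Z\cap U|$, and use the transitive-action machinery of Section~2 to force a contradiction. I would first observe that Hypotheses~\ref{hyp} combined with $Z\subseteq \zent N$ force $\lambda$ to be $G$-invariant, since $Z$ is abelian, $\lambda$ is linear, every $\tau\in\irr{N\mid\lambda}$ has $\tau_Z=\tau(1)\lambda$, and so the transitive $G$-action on $\irr{N\mid\lambda}$ satisfies $(\tau^g)_Z = \tau(1)\lambda^g = \tau(1)\lambda$, forcing $\lambda^g=\lambda$ for all $g\in G$.

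Setting $W = UZ \nor G$, Lemma~\ref{propert}(a) gives $G$-transitivity on $\mathcal A:=\irr{W\mid\lambda}$, so all members share a common degree $m$ with $m^2 |\mathcal A| = |W:Z| = |U|/|U\cap Z|$; the assumption gives $q\mid|W:Z|$. I would then choose $P/Z\in\syl q{W/Z}$ and set $\mathcal B:=\irr{P\mid\lambda}$. The Frattini argument $G=W\cdot\norm G P$, combined with the triviality of the action of $W$ on $\irr W$, makes $A:=\norm G P$ act transitively on $\mathcal A$; Theorem~\ref{actrans} then yields $|\mathcal B|=|\mathcal A|_q$, and combining with Lemma~\ref{aux} together with $|P:Z|=\sum_{\delta\in\mathcal B}\delta(1)^2$ forces every $\delta\in\mathcal B$ to have degree $m_q$.

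I would next extract structural information. Let $Q\in\syl q U$. Since $Q\subseteq N$ and $Z\subseteq \zent N$, $Q$ centralizes $Z$; since $Q\cap Z\subseteq U\cap Z$ is a $q$-group in a $q'$-group, $Q\cap Z=1$, and a count of orders gives $P=QZ=Q\times Z$. The bijection $\mathcal B\leftrightarrow\irr Q$ sending $\beta\otimes\lambda\leftrightarrow\beta$ forces every $\beta\in\irr Q$ to have degree $m_q$; since $1_Q$ is linear, $m_q=1$ and $Q$ is abelian, so $|\mathcal B|=|Q|>1$.

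The final step is to exhibit a $G$-fixed element of $\mathcal B$, which together with the transitive $G$-action on $\mathcal B$ forces $|\mathcal B|=1$ and contradicts $|Q|>1$. The natural candidate is $\delta_0 := 1_Q\otimes\lambda$. When $q\nmid|Z|$, $Q$ is the unique (hence characteristic) Sylow $q$-subgroup of $P$, so $Q$ is $G$-invariant, $\delta_0$ is $G$-fixed, and the argument concludes immediately. The principal obstacle is the remaining case $q\mid|Z|$, in which $Z$ contributes $q$-torsion lying outside $U$: the decomposition $P=Q\times Z$ is no longer $G$-equivariant, and one must produce the required $G$-fixed element of $\mathcal B$ by a refined analysis inside the Sylow $q$-subgroup $P_q=Q\times Z_q$ of $P$, for instance via a second application of the Frattini/Theorem~\ref{actrans} machinery at the level of $P_q$, or by securing a $G$-invariant complement to $Z_q$ in $P_q$ through a cohomological or character-triple reduction.
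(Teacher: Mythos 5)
Your argument tracks the paper's own proof quite closely up to the decisive point (Frattini plus Theorem \ref{actrans} to get $A=\norm{G}{P}$ acting transitively on $\irr{P|\lambda}$), but as submitted it is not a complete proof: you explicitly leave open the case $q\mid |Z|$, and that is precisely the hard case. The paper closes this gap with a canonical-extension argument which works uniformly. In your notation: under the contradiction hypothesis $Z\cap U$ is a $q'$-group, so $\nu=\lambda_{Z\cap U}$ has $q'$-order while $(P\cap U)/(Z\cap U)\cong P/Z$ is a $q$-group; hence $\nu$ has a canonical extension $\hat\nu\in\irr{P\cap U}$ of $q'$-determinantal order, and Corollary 4.2 of [Is2] gives a bijection $\irr{P|\lambda}\to\irr{P\cap U|\nu}$ by restriction. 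The preimage $\rho$ of $\hat\nu$ is fixed by every $a\in A$ (such $a$ fixes $\nu$, normalizes $P\cap U$, hence fixes $\hat\nu$ by uniqueness, hence fixes $\rho$ by injectivity of restriction), so transitivity gives $\irr{P|\lambda}=\{\rho\}$ with $\rho$ linear and $\rho_Z=\lambda$; Gallagher then forces $P=Z$, contradicting $q\mid |W:Z|$. In fact the ``obstacle'' you describe is not really there even in your own setup: since $Z\cap U$ is a $q'$-group, $P\cap U=Q(Z\cap U)=Q\times(Z\cap U)$ has $Q$ as its unique, hence characteristic, Sylow $q$-subgroup, and $P\cap U$ is normalized by $\norm{G}{P}$ because $U\nor G$; thus $Q$ is $A$-invariant whether or not $q$ divides $|Z|$, and $\delta_0=1_Q\times\lambda$ is $A$-fixed in all cases. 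But a proof that defers its key case to unspecified ``cohomological or character-triple'' arguments is incomplete.

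A secondary error: your opening claim that $\lambda$ must be $G$-invariant does not follow. Hypothesis \ref{hyp} asserts only that any two members of $\irr{N|\lambda}$ are $G$-conjugate, not that $G$ stabilizes the set $\irr{N|\lambda}$; for arbitrary $g\in G$ one has $(\tau^g)_Z=\tau(1)\lambda^g$, but there is no reason for $\tau^g$ to lie over $\lambda$, so you cannot conclude $\lambda^g=\lambda$. The correct move, and the one the paper makes, is to work inside $I_G(\lambda)$: because $Z\sbs\zent{K}$, any $g$ with $\tau_1^g=\tau_2$ for $\tau_1,\tau_2\in\irr{K|\lambda}$ automatically satisfies $\lambda^g=\lambda$, so $I_G(\lambda)$ already acts transitively on $\irr{K|\lambda}$, and the Frattini argument should be carried out with $A=\norm{I_G(\lambda)}{P}$ rather than $\norm{G}{P}$. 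This is easily repaired, but it needs to be said, since otherwise $\norm{G}{P}$ need not act on $\irr{W|\lambda}$ at all.
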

 
 \begin{proof}
 Let $K=UZ \nor G$. If $q$ does not divide $|K:Z|=|U:U\cap Z|$ then we are done.
 Let $1 \ne Q/Z \in \syl q{K/Z}$.
  By Lemma \ref{propert}(a), we know that $G_\lambda=I_G(\lambda)$ acts transitively
 on $\irr{K|\lambda}$. By the Frattini argument, we have that $G_\lambda=
 K\norm{G_\lambda}Q$. Notice then that $A=\norm{G_\lambda}Q$
 acts transitively on $\irr{K|\lambda}$. Also $A$ acts on $\irr{Q|\lambda}$
 and $[(\chi^a)_Q, \delta^a]=[\chi_Q, \delta]$ for $a \in A$,
 $\chi \in \irr{K|\lambda}$ and $\delta \in \irr{Q|\lambda}$.
 By Theorem \ref{actrans}, we have that $A$ acts transitively on
 $\irr{Q|\lambda}$.
 
 Suppose now that $q$ does not divide $|Z \cap U|$. Let $\nu=\lambda_{Z \cap U}$.
 Then $\nu$ has a canonical extension $\hat\nu \in \irr{Q\cap U}$
 of $q'$-order.
 By Corollary (4.2) of [Is2], we know that
 restriction defines a natural bijection
 $$\irr{Q|\lambda} \rightarrow \irr{Q\cap U|\nu} \, .$$
 Let $\rho \in \irr{Q|\lambda}$ be such that $\rho_{Q\cap U}=\hat\nu$.
 In particular, $\rho$ is linear. Also $\rho_Z=\lambda$.
 Let $a \in A$. Then $a$ fixes $\lambda$, and therefore $\nu$. Now, $a$ normalizes $Q$
 and $U$, so $a$ normalizes $U \cap Q$. By uniqueness, we have that $(\hat\nu)^a=
 \hat\nu$. Thus $\rho^a=\rho$ by uniqueness.
 Since $A$ acts transitively on $\irr{Q|\lambda}$, it follows that
 $\irr{Q|\lambda}=\{ \rho\}$. Since $\rho_Z=\lambda$, by Gallagher
 Corollary (6.17) of [Is],
 we know that $|\irr{Q|\lambda}|=|\irr{Q/Z}|$. We conclude that $Q=Z$, and this is the final
 contradiction.
  \end{proof}
 
 \medskip

 As in [HI], we shall only use the Classification of Finite Simple Groups
 in the following result. If $X$ is a finite group, recall that $M(X)$
 is the Schur multiplier of $X$.
 
 \medskip
 
 \begin{thm}\label{HI}
 Let $X$ be a non-abelian simple group. Then there exists a prime
 $p$ such that $p$ divides $|X|$, $p$ does not divide $|M(X)|$,
 and there is no solvable subgroup of $X$ having $p$-power index.
 \end{thm}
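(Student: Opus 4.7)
The plan is to invoke the Classification of Finite Simple Groups and work through the list of non-abelian finite simple groups family by family. The strategy in each case is to produce a prime $p$ with $p\mid|X|$ exactly to the first power, $p\nmid |M(X)|$, and such that $X$ has no proper subgroup of index a power of $p$ other than (at worst) an obviously non-solvable one. If $|X|_p = p$, then every subgroup $H \le X$ of $p$-power index has index $1$ or $p$; the former is $X$ itself (non-solvable, as $X$ is non-abelian simple), while the latter yields an embedding $X \hookrightarrow S_p$ via the coset action, which is very restrictive.

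For the alternating groups $X = A_n$ with $n\ge 5$, I would pick a prime $p\in(n/2,n]$ using Bertrand's postulate. Then $|A_n|_p = p$, and since $|M(A_n)| \in \{1,2,6\}$, any $p\ge 5$ automatically satisfies $p\nmid |M(A_n)|$. If $H < A_n$ has index $p$, the coset action embeds $A_n$ into $S_p$, forcing $n=p$ and $H \cong A_{n-1}$. For $n \ge 6$ this $A_{n-1}$ is non-solvable. The residual case $n = 5$ is handled by taking $p = 3$: $|A_5|_3 = 3$, $3\nmid|M(A_5)|=2$, and simplicity of $A_5$ forbids any subgroup of index $3$.

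For the twenty-six sporadic groups, a suitable prime can be read off directly from the ATLAS, which tabulates the Schur multipliers and all maximal subgroups.

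The main obstacle is the groups of Lie type $X = X(q)$ defined over $\FF_q$ with $q = r^a$. Here the natural approach is to let $p$ be a Zsigmondy prime divisor of a suitably chosen cyclotomic factor $\Phi_e(q)$ of $|X|$, with $e$ depending on the type of $X$. Generically such a $p$ satisfies $|X|_p = p$, a Sylow $p$-subgroup is cyclic and lies in a maximal torus, and $p$ avoids the short known list of primes which can divide $|M(X)|$. Overgroups of cyclic tori of this shape are then controlled by Borel--Tits and by the classification of maximal overgroups of semisimple $p$-elements, forcing any subgroup of $p$-power index in $X$ to be either $X$ itself or to contain a non-solvable composition factor of Lie type. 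The genuinely delicate part is the finite list of exceptions: the small configurations where Zsigmondy's theorem fails (such as $\PSL_2(q)$ for very small $q$, $\PSL_3(2)$, $\PSU_3(3)$, and $\mathrm{Sz}$-type examples) together with the finite list of Lie type groups with exceptional Schur multipliers (e.g.\ $\PSL_3(4)$, $\PSU_4(3)$, $\PSU_6(2)$, ${}^2E_6(2)$), each of which must be resolved by direct reference to the ATLAS.
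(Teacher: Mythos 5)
The paper does not actually prove this statement: its ``proof'' is the single line ``This is Theorem (2.1) of [HI]'', so the result is imported from Howlett--Isaacs. What you are proposing is therefore a reconstruction of the CFSG case analysis that [HI] carries out. Your alternating-group argument is correct and complete: a Bertrand prime $p\in(n/2,n]$ divides $|A_n|$ exactly once, so a subgroup of $p$-power index has index $1$ or $p$, the order comparison $n!/2\le p!$ forces $p=n$, the index-$n$ subgroups are non-solvable for $n\ge 6$, and $p=3$ disposes of $A_5$. The sporadic case is indeed a finite table check.

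The gap is in the Lie-type case, which is where essentially all of the work lies, and two of the specific claims you lean on there do not hold as stated. First, a Zsigmondy prime divisor $p$ of $\Phi_e(q)$ need \emph{not} satisfy $|X|_p=p$: when $\Phi_e$ divides the order polynomial exactly once one gets a cyclic Sylow $p$-subgroup inside a maximal torus, but $p$ may divide the integer $\Phi_e(q)$ to arbitrarily high multiplicity, so the reduction ``index $1$ or $p$'' that powers your alternating-group argument is unavailable. Second, Borel--Tits controls overgroups of \emph{unipotent} subgroups (defining characteristic), not overgroups of maximal tori, so it cannot deliver the overgroup classification you invoke; the relevant torus-normalizer overgroup theorems are much later and much harder. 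The clean way to close the gap is Guralnick's theorem (J.\ Algebra 81 (1983)) classifying all pairs $H<X$ with $X$ simple and $|X:H|$ a prime power: up to the natural actions these are $A_{n-1}<A_n$ with $n=p^a$, point and hyperplane stabilizers in $\PSL_n(q)$ with $(q^n-1)/(q-1)=p^a$, $A_5<\PSL_2(11)$, $M_{10}<M_{11}$, $M_{22}<M_{23}$, and an index-$27$ parabolic in $\PSU_4(2)$. Among these the solvable subgroups occur essentially only for $A_5$, for Borel subgroups of $\PSL_2(q)$ with $q+1$ a prime power, and for the point stabilizers in $\PSL_3(2)$ and $\PSL_3(3)$; the theorem then reduces to choosing, for each $X$, one prime avoiding both this short list and the known multiplier table. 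Without Guralnick's theorem (or the equivalent analysis in [HI, \S 7]), your Lie-type paragraph is a program rather than a proof.
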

 
 \begin{proof}
 This is Theorem (2.1) of [HI].
 \end{proof}

 \section{The Glauberman  Correspondence}
 
 The idea to use the Glauberman  correspondence
 in the Iwahori-Matsumoto conjecture appears in [HI].
 As we shall see in the proof of our main theorem,
 we need to do the same here, in a more sophisticated way.
 For the definition and properties of the Glauberman correspondence,
 we refer the reader to Chapter 13 of [Is].
 
 \medskip
 
 We remark now the following.
 If $Q$ is a $p$-group that acts by automorphisms 
 on  a $p'$-group $L$, then the Glauberman correspondence is
 a bijection $^*: {\rm Irr}_Q(L) \rightarrow \irr C$,
 where ${\rm Irr}_Q(L)$ is the set of $Q$-invariant irreducible
 characters of $L$ and $C=\cent LQ$ is the fixed point subgroup.
 Furthermore, for $\chi \in {\rm Irr}_Q(L)$, we have that
 $$\chi_C=e \chi^* + p \Delta \, ,$$
 where $p$ does not divide the integer $e$ and $\Delta$ is
 a character of $C$ (or zero).
 In particular, we easily check that the Glauberman correspondence $^*$
 commutes with the action of ${\rm Gal}(\bar \Q/\Q)$
 and with the action of the group of automorphisms of the semidirect product
 $LQ$ that fix $Q$.
 In particular, we have that $\Q(\chi)=\Q(\chi^*)$.

 \medskip

 The next deep result is key in character theory. Its  proof, in the case
 where $Z=1$, 
 is due to E. C. Dade. (Other proofs are due to L. Puig.)
 The following useful strengthening  
 is due to A. Turull, to whom we thank   for useful conversations on this subject.
 
 \begin{thm}\label{deepglau}
 Suppose that $G$ is a finite group, $LQ \nor G$, where $L \nor G$,
 $(|L|,|Q|)=1$, and $Q$ is a $p$-group for some prime $p$.
 Suppose that $LQ \sbs N \nor G$, and $Z \nor G$,
 is contained in $Q$ and in $\zent N$. Let $\lambda \in \irr Z$.
 Let $H=\norm G Q$ and $C=\cent LQ$.
 Then for every  $\tau \in {\rm Irr}_Q(L)$ there is a bijection
 $$\pi(N,\tau):    \irr{N|\tau} \rightarrow
  \irr{N \cap H|{\tau^*}} \, ,$$
  where $\tau^* \in \irr C$ is the $Q$-Glauberman correspondent
  of $\tau$, 
  such that:
  \begin{enumerate}[(a)]
  
 \item
 For $\gamma \in \irr{N|\tau}$, $h \in H$ we have that
  $$\pi(N, \tau^{h})(\gamma^{h})=
 (\pi(N, \tau)(\gamma))^{h} \, .$$
 
 \item
 $\rho \in \irr{N|\tau}$ lies over $\lambda$ if and only if $\pi(N,\tau)(\rho)$
 lies
 over $\lambda$.
 
 \end{enumerate}
 \end{thm}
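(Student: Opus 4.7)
The plan is to build $\pi(N,\tau)$ in two stages: first reduce to the case where $\tau$ is $N$-invariant via the Clifford correspondence, and then invoke the Dade--Puig character triple isomorphism (in the form strengthened by Turull to track central characters), checking equivariance and preservation of $\lambda$ at the end.

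For the reduction, let $T = I_N(\tau)$. Since $LQ \sbs T$ and $Q \in \syl p{LQ}$, a Frattini argument gives $T = L \cdot N_T(Q) = L \cdot (T \cap H)$. Because the Glauberman correspondence commutes with the action of automorphisms of $LQ$ that fix $Q$, we have $I_H(\tau) = I_H(\tau^*)$, and hence $T \cap H = I_{N \cap H}(\tau^*)$. The Clifford correspondence yields bijections
\[
\irr{T|\tau} \to \irr{N|\tau}, \qquad \irr{T \cap H|\tau^*} \to \irr{N\cap H|\tau^*}
\]
by induction, both of them $H$-equivariant and (since $Z$ is central in $N$ and contained in $T \cap H$) preserving lying over $\lambda$. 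So it suffices to construct $\pi(N,\tau)$ under the assumption that $\tau$ is $N$-invariant.

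Assume then that $\tau$ is $N$-invariant, so $\tau^*$ is $(N \cap H)$-invariant. A second Frattini argument applied to $LQ \nor N$ gives $N = L(N \cap H)$; and since $L$ is a $p'$-group normal in $LQ$, a short commutator computation shows $N_L(Q) = C_L(Q) = C$, so $L \cap (N \cap H) = C$. Hence there is a canonical isomorphism $N/L \cong (N \cap H)/C$, and the Dade--Puig theorem supplies a strong isomorphism of character triples $(N,L,\tau) \to (N\cap H, C, \tau^*)$. The induced bijection on characters above $\tau$ and $\tau^*$ is the desired $\pi(N,\tau)$.

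Property (a) then follows from the canonicity of the Glauberman correspondence and of the Dade--Puig isomorphism under automorphism actions: for $h \in H$, conjugation by $h$ stabilizes $L$ and $Q$, sends $\tau$ to $\tau^h$ and $\tau^*$ to $(\tau^h)^*$, and transports $\pi(N,\tau)$ to $\pi(N,\tau^h)$. For property (b), observe that $Z \sbs Q \sbs N \cap H$ with $Z \cap L = 1$, so $ZL/L \cong Z \cong ZC/C$ sit centrally in $N/L$ and $(N \cap H)/C$ and correspond under the above isomorphism; Turull's strengthening of Dade's theorem is precisely that the character triple isomorphism preserves the central character on such a common central subgroup, which is exactly (b). The main obstacle in this whole argument is invoking Turull's refinement: the baseline Dade--Puig construction yields a strong isomorphism of character triples (sufficient for the $Z=1$ case), but upgrading it to preserve the $Z$-character requires a careful construction of the projective representations realizing the two triples together with matching cocycle data over $Z$. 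Once that is available, everything else reduces to Clifford-theoretic bookkeeping.
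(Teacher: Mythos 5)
Your proposal is correct and follows essentially the same route as the paper: both ultimately rest on citing the Dade--Puig correspondence above the Glauberman correspondence together with Turull's strengthening (equivariance under $\norm GQ$ and preservation of the central character on $Z$), which is exactly what the paper extracts from Theorem 7.12 of [T1], Theorem 6.5 of [T2] and Theorem 10.1 of [T3]. Your preliminary Clifford-correspondence reduction to the $N$-invariant case is a harmless organizational difference; the paper avoids it only because Turull's result is already formulated for the full $H$-orbit of $\tau$ and comes with the required equivariance built in.
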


 \medskip
 
\begin{proof}
It follows from the proofs of Theorem 7.12 of [T1]
and Theorem 6.5 of
 [T2]. Specifically, we make $\psi=\theta$
 in Theorem 7.12 of [T1],
 and  $G$, $H$, $\theta$  in Theorem 7.12 of [T1], 
 correspond to $G$, $L$, $\tau$; while
 $G'$, $H'$, $\theta'$ correspond
 to $H$, $C$ and $\tau^*$, respectively.
 Now,  
 Theorem 7.12 (1) and (2) predicts a bijection
 $$^{\prime}: \bigcup_{x \in H}\irr{N|\tau^x} \rightarrow
 \bigcup_{x \in H}\irr{N\cap H|(\tau^*)^x} \, ,$$
 which commutes with the action of $H$ (part  (7)
 of Theorem 7.12). By parts (4), (1)   and (2) of the same
 theorem, writing $R=L$ and $S=N$, we have that $\gamma \in \irr{N|\tau}$
 if and only if $\gamma' \in \irr{N\cap H|\tau^*}$.
We call $\pi(N, \tau)$
the restriction of the map $^\prime$ to $\irr{N|\tau}$.
Part (b) follows from Theorem 10.1 of [T3].
  \end{proof}

 \medskip
 The following is easy.
 \medskip

\begin{lem}\label{easyglau}
Suppose that $LQ \nor G$, where $L \nor G$,
 $(|L|,|Q|)=1$, and $Q$ is a $p$-group for some prime $p$.
 Suppose that $\tau \in {\rm Irr}_Q(L)$, and let $\tau^* \in \irr{C}$
 be the Glauberman correspondent, where $C=\cent LQ$.
 Suppose that  $Z \nor G$
 is contained in $C$. Let $\lambda \in \irr Z$ be $L$-invariant.
 Let $H=\norm G Q$.
 Suppose that $$\lambda^L=f(\tau^{h_1} + \ldots +\tau^{h_s}) \, ,$$
 for some $h_i \in H$, and some integer $f$.
Then 
$$\lambda^C=f^*((\tau^*)^{h_1} + \ldots +(\tau^*)^{h_s}) \, ,$$
for some integer $f^*$.
\end{lem}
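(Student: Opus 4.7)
The aim is to identify the irreducible constituents of $\lambda^C$ with the $s$ distinct characters $(\tau^*)^{h_i}$ and show each appears with the same multiplicity $c = \tau^*(1)/\lambda(1)$; then $f^* = c$ gives the claim.

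First I would observe that each $h_i$ fixes $\lambda$. Since every $\tau^{h_i}$ is an irreducible constituent of $\lambda^L$ and $\lambda$ is $L$-invariant, Clifford's theorem forces $(\tau^{h_i})_Z = f\lambda$. Replacing $\tau$ by a suitable $\tau^{h_j}$ and relabeling, I may assume $\tau$ itself lies over $\lambda$, so $\tau_Z = f\lambda$; then $(\tau^{h_i})_Z = (\tau_Z)^{h_i} = f\lambda^{h_i}$ forces $\lambda^{h_i} = \lambda$. From $\tau_C = e\tau^* + p\Delta$, the fact that $\tau_Z = f\lambda$ is a multiple of $\lambda$ forces every irreducible constituent of $\tau_C$ to restrict on $Z$ to a multiple of $\lambda$; in particular $(\tau^*)_Z = c\lambda$ for some $c > 0$. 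Since each $h_i \in H$ normalizes $L$ and $Q$, hence $C = \cent L Q$, we obtain $((\tau^*)^{h_i})_Z = c\lambda$, so $(\tau^*)^{h_i} \in \irr{C|\lambda}$. By the compatibility of the Glauberman correspondence with automorphisms of $LQ$ fixing $Q$ noted just before Theorem \ref{deepglau}, $(\tau^{h_i})^* = (\tau^*)^{h_i}$, and these are $s$ distinct characters.

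The main step is to show $\irr{C|\lambda} = \{(\tau^*)^{h_i}\}$, which I would do by arguing that the Glauberman correspondence restricts to a bijection $\mathrm{Irr}_Q(L) \cap \irr{L|\lambda} \to \irr{C|\lambda}$. The forward direction is the argument above applied to any $\tau' \in \mathrm{Irr}_Q(L)$ over $\lambda$. For the reverse, given $\sigma \in \irr{C|\lambda}$ with Glauberman preimage $\tau' \in \mathrm{Irr}_Q(L)$, the decomposition $\tau'_C = e'\sigma + p\Delta'$ together with $\sigma_Z = c'\lambda$ (using that $\lambda$ is $C$-invariant, as $C \le L$) yields $[\tau'_Z, \lambda] \ge e'c' > 0$, so $\tau' \in \irr{L|\lambda}$. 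The hypothesis $\lambda^L = f\sum_i \tau^{h_i}$ gives $\irr{L|\lambda} = \{\tau^{h_i}\}$, all $Q$-invariant (since $\tau$ is, and each $h_i$ normalizes $Q$), so the source of the restricted bijection has size $s$; hence $|\irr{C|\lambda}| = s$ and the desired equality holds.

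Finally, Frobenius reciprocity gives $[\lambda^C, (\tau^*)^{h_i}] = [\lambda, ((\tau^*)^{h_i})_Z] = [\lambda, c\lambda] = c$, the same for all $i$, so $\lambda^C = c\sum_{i=1}^s (\tau^*)^{h_i}$ and we take $f^* = c$. I expect the main obstacle to be the Glauberman restriction bijection in the third paragraph; the remaining steps reduce to routine Clifford theory and the known compatibility of the Glauberman correspondence with the action of $H$.
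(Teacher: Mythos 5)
Your proposal is correct and follows essentially the same route as the paper: identify $\irr{C|\lambda}$ with $\{(\tau^*)^{h_i}\}$ via the $H$-equivariance of the Glauberman correspondence and the fact that it preserves lying over $\lambda$, then compute the common multiplicity from the $C$-invariance of $\lambda$. The only difference is presentational: where the paper cites Theorem (13.29) of [Is] for the ``lies over $\lambda$ iff'' statement, you rederive it from the decomposition $\chi_C = e\chi^* + p\Delta$, and you spell out the multiplicity count and the normalization $\lambda^{h_i}=\lambda$ that the paper leaves as ``we easily conclude.''
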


 \medskip
 \begin{proof}
 We know by Theorem (13.29) of [Is] that
 if $\nu \in {\rm Irr}_Q(L)$, then  $\nu^*$ lies
 above $\lambda$ if and only if  $\nu$ lies above $\lambda$.
 Let $\rho \in \irr{C|\lambda}$. Then $\rho=\nu^*$
 for some $\nu \in \irr{L|\lambda}$. Thus
 $\nu=\tau^h$ for some $h \in H$, by hypothesis. Then 
 $$\rho=\nu^*=(\tau^h)^*=(\tau^*)^h \, ,$$
 because $H$ commutes with 
 Glauberman  correspondence.
 Since $\lambda$ is $C$-invariant, then
 we easily conclude the proof of the lemma.
 \end{proof}

 \medskip




\medskip

\section{Theorem A}

In this section, we prove Theorem A.

  \medskip
\begin{thm}\label{main}
Assume Hypothesis \ref{hyp}.
Then $I_N(\lambda)/Z$ is solvable.
\end{thm}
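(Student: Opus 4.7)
The plan is to proceed by induction on $|G|$. First I would carry out the standard reductions via Character Triple Isomorphisms (Chapter 11 of [Is]) to assume $Z \sbs \zent G$, $Z$ is cyclic, and $\lambda$ is faithful and linear. Next, applying Lemma \ref{propert}(c) with $K=Z$ to the triple $(I_G(\lambda), I_N(\lambda), \lambda)$ shows that Hypothesis \ref{hyp} is preserved after replacing $(G,N,\lambda)$ by $(I_G(\lambda), I_N(\lambda), \lambda)$ (note $I_N(\lambda)\nor I_G(\lambda)$), and the desired conclusion depends only on the pair $(I_N(\lambda), Z)$. Thus I may assume $\lambda$ is $G$-invariant, so $I_N(\lambda)=N$ and the goal becomes to prove that $N/Z$ is solvable.

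Suppose, for contradiction, that $N/Z$ is non-solvable. I would pick $L\nor G$ minimal subject to $Z\sbs L\sbs N$ and $L/Z$ non-solvable, and let $R\nor G$ be the largest $G$-invariant subgroup of $L$ above $Z$ with $R/Z$ solvable. Then $L/R$ is characteristically simple non-abelian, i.e.\ a direct product of isomorphic copies of a single non-abelian simple group $X$, transitively permuted by $G$. Applying Theorem \ref{HI} to $X$ yields a prime $p$ with $p\mid |X|$, $p\notd |M(X)|$, and such that $X$ possesses no solvable subgroup of $p$-power index.

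With $p$ in hand, the plan is to exploit the deep Glauberman correspondence of Theorem \ref{deepglau} to descend to a fixed-point subgroup and invoke induction. Concretely, I would let $L_0/Z$ be the preimage in $R$ of $\OO_{p'}(R/Z)$ and $Q/Z$ a Sylow $p$-subgroup of $N/Z$; after a Frattini argument (replacing $G$ by a suitable normalizer and shrinking $N$ correspondingly), this places us in the setup of Theorem \ref{deepglau} with $L_0\nor G$, $L_0Q\nor G$, $(|L_0/Z|,|Q/Z|)=1$, and $Z\sbs Q\cap\zent N$. Parts (a)--(b) of Theorem \ref{deepglau}, combined with Lemma \ref{easyglau}, transport Hypothesis \ref{hyp} to the triple $(H, \cent{L_0}{Q}\cdot \text{(normal closure)}, \lambda)$ where $H=\norm G Q$. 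The induction hypothesis applied to this smaller situation gives solvability on the centralizer side; pulling back through the Glauberman bijection forces each simple factor $X$ of $L/R$ to admit a solvable subgroup of $p$-power index, contradicting the choice of $p$.

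The main obstacle will be the delicate orchestration of this Glauberman reduction: one must choose $L_0$ and $Q$ so that $L_0Q\nor G$ holds while simultaneously preserving the transitivity of the $G$-action on $\irr{N|\lambda}$, and then verify that the bijection of Theorem \ref{deepglau} carries orbits to orbits compatibly. Theorem \ref{div} is the principal auxiliary tool here—it rules out the existence of $G$-invariant $U\sbs N$ whose $p$-part of the order falls outside $Z$, thereby constraining the arithmetic of $Z\cap L_0$—and the hypothesis $p\notd|M(X)|$ is what ultimately allows the required splitting of the Sylow $p$-structure over $Z$ to be extracted, yielding the final contradiction with Theorem \ref{HI}.
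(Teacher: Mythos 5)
Your outline shares the paper's skeleton (induct, reduce to $\lambda$ being $G$-invariant via Lemma \ref{propert}(c), pass to a character triple with $Z$ cyclic and $\lambda$ faithful, extract a prime from Theorem \ref{HI}, and finish with Glauberman plus Theorem \ref{div}), but the central Glauberman step is set up in a way that does not work. Theorem \ref{deepglau} requires $L\nor G$, $LQ\nor G$ and $(|L|,|Q|)=1$. You take $p$ to be the CFSG prime from Theorem \ref{HI} and $Q/Z$ a \emph{Sylow} $p$-subgroup of $N/Z$; then $L_0Q$ is not normal in $G$ in general, and no Frattini argument repairs this --- Frattini only lets you replace $G$ by $\norm{G}{Q}$ after you already have a normal subgroup whose Sylow subgroup $Q$ is. The paper avoids this by using Glauberman for a different and more modest purpose: to prove that the solvable radical $S/Z$ of $N/Z$ equals $\mathbf{F}(N)/Z$ (Step 5). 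There one takes a solvable chief factor $R/F$ of $G$ with $F=\mathbf{F}(N)$, a $q$-group for some prime $q$ (unrelated to the CFSG prime), $L$ the normal $q$-complement of the nilpotent group $F$, and $Q\in\syl q R$, so that $LQ=R\nor G$ holds automatically; the induction applied to $(\norm GQ,\norm NQ,\lambda)$ then gives solvability of $N$ outright in that case, not a statement about subgroups of $p$-power index.

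Consequently your route to the final contradiction is also missing its engine. In the paper, the assertion that $N$ (hence each simple factor $X$ of $N/S$) has a solvable subgroup of $p$-power index comes not from Glauberman but from a separate counting argument (Step 6): for a nontrivial normal $p$-subgroup $Q/Z$ of $G/Z$ inside $N$ one writes $\lambda^Q=f(\tau_1+\cdots+\tau_k)$ with $|Q:Z|=f^2k$, so $k=|G:I_G(\tau)|$ is a $p$-power, while $I_N(\tau)/Q$ is solvable by induction (Step 4). This forces the CFSG prime $q$ of Theorem \ref{HI} to avoid $|F:Z|$, and then the contradiction is reached by combining the Schur-multiplier condition (via Corollary 7.2 of [HI] applied to $N/W$, $W$ the normal $q$-complement of $F$) with Theorem \ref{div} applied to $U=N'$: one gets $q\notd|N'\cap Z|$ although $q$ divides $|N'|$. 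Your proposal instead tries to extract the $p$-power-index statement by ``pulling back through the Glauberman bijection'' and to use Theorem \ref{div} to constrain $Z\cap L_0$; neither step is substantiated, and as written the argument has no mechanism producing the final contradiction. You would need to add the analogue of Steps 5 and 6 (the $\mathbf{F}(N)=S$ reduction and the counting argument on normal $p$-subgroups) for the proof to close.
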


\medskip

\begin{proof}
We argue by  induction  on $|N:Z|$.
 Let $S/Z$ be the largest solvable normal
subgroup of $N/Z$. Let $T=I_G(\lambda)$
be the stabilizer of $\lambda$ in $G$.

\smallskip
\noindent
{\sl Step 1.}~~{\sl We may assume that
 $\lambda$ is $G$-invariant.}
 \smallskip
 
We claim that $(I_G(\lambda), I_N(\lambda), \lambda)$
satisfies Hypothesis 3.1. Indeed,
let $\tau_i \in \irr{I_N(\lambda) |\lambda}$ for $i=1,2$.
  
 By Lemma \ref{propert}(c) (with $K=Z$),
  there is $g \in I_G(\lambda) $ such that
    $(\tau_1)^{g}=\tau_2$.
 Hence, by working in $I_G(\lambda)$, we see that
 it is no loss to
  assume that $\lambda$ is invariant in $G$.
 Hence, we wish to prove that $N/Z$ is solvable, that is, that $S=N$.
 
 \smallskip
\noindent
{\sl Step 2.}~~{\sl If $Z\le K<N$, with $K \nor G$, then $K/Z$
is solvable. Also $N/S$ is isomorphic
to a direct product of a non-abelian simple group $X$.}
 \smallskip
 
  By Lemma \ref{propert} (a) and induction, we have that
if $Z\le K<N$, with $K \nor G$, then $K/Z$
is solvable.   Then $N/S$
is a chief factor of $G/Z$,  and it is isomorphic
to a direct product of a non-abelian simple group $X$.

 \smallskip
\noindent
{\sl Step 3.}~~{\sl We may assume that $Z$ is cyclic and that
  $\lambda$
 is faithful.}
 \smallskip

  This follows by using character triple isomorphisms.  
 \smallskip

\noindent
{\sl Step 4.}~~{\sl If $Z <K \sbs N$ is a normal
 subgroup of $G$, and $\tau \in \irr{K|\lambda}$, then $I_N(\tau)/K$ 
is solvable. Also $S>Z$.}
 \smallskip

The first part is a  direct consequence of Lemma \ref{propert}(c) and induction.
 If $S=Z$, then by Step 2, we have that $N/Z$ is a minimal normal non-abelian
 subgroup of $G/Z$. 
 Then $N/Z$ is a direct product of non-abelian
 simple groups isomorphic to $X$, and $Z=\zent N$.
 Also, $N'Z=N$.
 By Theorem \ref{HI},
 there is a prime $p$ dividing $|X|$ such that
 $p$ does not divide $|M(X)|$. By Corollary 7.2 of [HI],
 we have that $p$ does not divide $|N'\cap Z|$.
 Since $p$ divides $|N'|$, 
 this contradicts   Theorem \ref{div} with $U=N'$.

 \smallskip
 \noindent
{\sl Step 5.}~~{\sl  $F={\bf F}(N)=S$.}
 \smallskip
 
 Otherwise, let  $R/F$ be a solvable chief factor
 of $G$ inside $N$. Thus $R/F$ is a $q$-group
 for some prime $q$. 
Let $L$ be the Sylow $q$-complement
of $F$. Let $Z_{q'}=L \cap Z$.
Let  $Q$ be a Sylow $q$-subgroup of $R$,
so that $R=LQ$. Let $Z_q=Q\cap Z$,
so that $Z=Z_{q'} \times Z_q$.
We have that $G=LH$,
where $H=\norm G Q$, by the Frattini argument.
Let $C=\cent LQ$.

Write $\lambda=\lambda_{q'} \times \lambda_{q}$,
where $\lambda_{q'}=\lambda_{Z_{q'}}$, and
$\lambda_q=\lambda_{Z_q}$.
By coprime action and counting, we see that 
$Q$ fixes some
$\tau_{q'} \in \irr{L|\lambda_{q'}}$. 
Let $\tau=\tau_{q'} \times \lambda_{q} \in \irr{LZ}$.
By hypothesis and Lemma \ref{propert}(a), we can write
$$\lambda^{LZ}=f( \tau^{h_1} + \ldots + \tau^{h_s}) \, ,$$
where $h_i \in H$, and $\lambda^{h_i}=\lambda$, because
$\lambda$ is $G$-invariant. 
Hence
$$\lambda_{q'}^L=f( \tau_{q'}^{h_1} + \ldots + \tau_{q'}^{h_s})\, .$$
By  Lemma \ref{easyglau},
we have that 
$$\lambda_{q'}^C=f^*( (\tau_{q'}^*)^{h_1} + \ldots + (\tau_{q'}^*)^{h_s})\, .$$
 
By   Theorem \ref{deepglau},
we know that there is a bijection
$$\pi(N, \tau_{q'}): \irr{N|\tau_{q'}) \rightarrow \irr{\norm NQ|\tau_{q'}^*}}$$
that commutes with $H $-action.

We claim that $(\norm GQ, \norm NQ, \lambda)$
satisfies Hypothesis \ref{hyp}. If this is the case,
since $\norm NQ < N$, we will have that 
$|\norm NQ : Z|<|N:Z|$, and by induction,
we will conclude that $\norm NQ/Z$ is solvable.
This implies that $N/Z$ is solvable,
and the proof of the theorem would be complete.  Suppose now that  
 $\psi_i \in \irr{\norm NQ |\lambda}$ for $i=1,2$.
 We are going to show that   there exists $x \in H$ 
such that $\psi_1^x=\psi_2$. 
Since $\psi_i$ lies over $\lambda_{q'}$,
then we have that $\psi_1$ lies over
some $(\tau_{q'}^*)^{h_j}$, and $\psi_2$ lies over
some $(\tau_{q'}^*)^{h_k}$ for some $h_j, h_k \in H$. 
Conjugating by $h_j^{-1}$ and by $h_k^{-1}$, we may assume that
$\psi_1$ and $\psi_2$ lie over $\tau_{q'}^*$.

Now, we know that there exists $\mu_i \in \irr{N|\tau_{q'}}$
such that $\pi(N, \tau_{q'})(\mu_i)=\psi_i$. 
In fact, since $\psi_i$ lies over $\lambda_q$, 
we have that $\mu_i \in \irr{N|\lambda_q}$ by Theorem \ref{deepglau}(b) (with the role
of $\lambda$ in that theorem being played now here by $\lambda_q$),
and therefore $\mu_i \in \irr{N|\tau} \sbs \irr{N|\lambda}$.
By hypothesis, there is $h \in H $ such that
$\mu_1^h=\mu_2$.
Now, $\tau_{q'}^h$ and $\tau_{q'}$ are below $\mu_2$,
so there is $h_1 \in N\cap H$ such that
$\tau_{q'}^{hh_1}=\tau_{q'}$. Replacing $h$
by $hh_1$, we may assume that $(\tau_{q'})^h=\tau_{q'}$.  
Now 
$$\psi_1^h=\pi(N, \tau_{q'})(\mu_1)^h=
\pi(N, \tau_{q'}^h)(\mu_1^h)=\pi(N, \tau_{q'})(\mu_2)=
\psi_2 \,,$$
as desired.
By induction, $N\cap H$ is solvable, so $N$ is solvable.
This proves Step 5.

\medskip

 \smallskip
 \noindent
{\sl Step 6.}~~{\sl  If $p$ divides $|F:Z|$, then $N$
has a solvable subgroup of $p$-power index. Therefore, so do
  the simple groups factors in the direct product of $N/S$.}
 \smallskip

Suppose that $Q/Z$ is a non-trivial normal $p$-subgroup
of $G/Z$, where $Q$ is contained in $N$. 
Then the irreducible constituents of $\lambda^Q$ all have the same degree
by Lemma \ref{propert}(a), for instance.
So we can write
$$\lambda^Q=f(\tau_1 +  \cdots + \tau_k) \, , $$
where $\tau_i \in \irr{Q |\lambda}$ are all the different
constituents. Write $\tau=\tau_1$. Notice that $f=\tau(1)$. 
Thus we deduce that $k$ is a power of $p$.
Now, since $G$ acts on $\Omega=\{\tau_1, \ldots, \tau_k\}$
transitively  by conjugation by Lemma 3.2(a),
we have that $|G:I_G(\tau)|=k$ is a power of $p$.  Hence, $|N:I_N(\tau)|$ is a power
of $p$. If $Q>Z$, then we know by induction
that $I_N(\tau)/Q$ is solvable. In this case, we deduce that
that $N$ has a solvable subgroup
with $p$-power index.  The same happens for factors of $N$.

 \smallskip
 \noindent
{\sl Step 7.}~~{\sl  Final contradiction.}
 \smallskip

We know by Step 2 that $N/S$ is isomorphic
to a direct product of  a non-abelian simple group $X$.
By Theorem (2.1), there exists a prime $q$ dividing $|X|$,
such that $q$ does not divide the order of the Schur multiplier of $X$,
and such that no solvable subgroup of $X$ has $q$-power index.
By Step 6, we have that $q$ does not divide $|F:Z|$.
Let $W$ be the normal $q$-complement of $F$. Hence $F=WZ$.
Also $F/W=\zent{N/W}$. By Corollary 7.2 of [HI], we have that
$q$ does not divide $|(N/W)' \cap F/W|$.
But $F/W$ is a $q$-group, so $(N/W)' \cap F/W=W/W$.
In particular, $N' \cap F \sbs W$.
Thus $q$ does not divide $|N' \cap F|$. Thus $q$ does not
divide $|N' \cap Z|$. Since $N/F$ is perfect, we have that $N' F=N$, so that $q$
divides $|N'|$.  
But this contradicts Theorem \ref{div} with $U=N'$. 
 \end{proof}

\medskip
Next is Theorem A of the introduction.
 \begin{cor}
 Suppose that $Z \nor G$, and let $\lambda \in \irr Z$.
  Assume that if $\chi, \psi \in \irr{G|\lambda}$,
  then there exists $a \in {\rm Aut}(G)$ stabilizing $Z$
  such that $\chi^a=\psi$. 
  If $T$ is the stabilizer of $\lambda$ in $G$, then
  $T/Z$ is solvable.
 \end{cor}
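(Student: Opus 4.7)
The plan is to reduce this at once to Theorem~\ref{main} by transferring the automorphism action into inner conjugation inside an enlarged group. Let $A$ denote the subgroup of $\Aut(G)$ that stabilizes $Z$ setwise, and form the semidirect product $\Gamma = G \rtimes A$. Then $G \nor \Gamma$, and since every element of $A$ normalizes $Z$, we also have $Z \nor \Gamma$. By the standard construction of the semidirect product, inner conjugation by $a \in A \sbs \Gamma$ agrees with the given automorphism action on $G$, and hence induces on $\irr G$ precisely the action defined by $\chi^a(g^a)=\chi(g)$.

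Next I would verify that $(\Gamma, G, \lambda)$ satisfies Hypothesis~\ref{hyp} with the role of $N$ played by $G$. The structural conditions $Z \sbs G \nor \Gamma$ and $Z \nor \Gamma$ are already in place; the remaining requirement is that, given $\tau_1, \tau_2 \in \irr{G|\lambda}$, there should exist $\gamma \in \Gamma$ with $\tau_1^{\gamma}=\tau_2$. But the hypothesis of the corollary supplies some $a \in A \sbs \Gamma$ with $\tau_1^a=\tau_2$, which is exactly the conjugating element needed. Applying Theorem~\ref{main} to the triple $(\Gamma, G, \lambda)$ then gives that $I_G(\lambda)/Z$ is solvable; since $T = I_G(\lambda)$ by definition, this is precisely the desired conclusion.

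There is essentially no serious obstacle here: the corollary is a one-step reduction to Theorem~\ref{main}. The only item requiring a modicum of care is the (standard) bookkeeping that inner conjugation inside $\Gamma$ recovers the prescribed action of $A$ on $\irr G$, so that the automorphism-theoretic hypothesis of the corollary really does translate into the $\Gamma$-conjugacy required by Hypothesis~\ref{hyp}.
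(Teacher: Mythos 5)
Your proposal is correct and follows exactly the paper's own argument: form the semidirect product $\Gamma = G\rtimes \Aut(G)_Z$, observe $Z\nor\Gamma$, verify Hypothesis~\ref{hyp} for $(\Gamma,G,\lambda)$, and apply Theorem~\ref{main}. The extra bookkeeping you include (that inner conjugation in $\Gamma$ realizes the automorphism action on $\irr G$) is the same point the paper leaves implicit.
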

 
  \begin{proof}
 Let $A={\rm Aut}(G)_Z$ be the group of automorphisms
 of $G$ that stabilize $Z$.
 Let $\Gamma=GA$ be the semidirect product.
 We have that $Z \nor  \Gamma$.
 By hypothesis, $(\Gamma, G, \lambda)$ satisfies Hypothesis \ref{hyp}.
 By Theorem \ref{main}, we have that $T/Z$
 is solvable.
\end{proof} 
 \medskip
 
 \section{Theorem B}
 
 \noindent

 We begin by giving another proof of a result of U. Riese ({\bf [5]})
that we shall need later. \medskip

\begin{lem}
Let $H \sbs G$ and $\alpha \in \irr H$.
Suppose that $\alpha^G = \chi \in \irr G$ and that every irreducible
constituent of $\chi_H$ has degree equal to $\alpha(1)$. Then
$\chi$ vanishes on $G - H$.
\end{lem}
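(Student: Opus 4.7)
My plan is to show that $\chi_H$ decomposes as a multiplicity-free sum of $[G:H]$ irreducible constituents (each of degree $\alpha(1)$), and then to compare the norm $[\chi,\chi]_G = 1$ with $[\chi_H,\chi_H]_H$ to force vanishing on $G - H$.

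First, by Frobenius reciprocity, $[\chi_H, \alpha] = [\chi, \alpha^G] = [\chi,\chi] = 1$, so $\alpha$ appears in $\chi_H$ with multiplicity exactly one. Write
$$\chi_H = \sum_{i=1}^{k} e_i \alpha_i$$
where $\alpha_1, \ldots, \alpha_k \in \irr H$ are the distinct irreducible constituents, with $\alpha_1 = \alpha$, and by hypothesis every $\alpha_i(1) = \alpha(1)$. Comparing degrees gives $\chi(1) = [G:H]\alpha(1) = \alpha(1)\sum_i e_i$, hence $\sum_i e_i = [G:H]$.

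Next, I would show that in fact $e_i = 1$ for all $i$. Apply Frobenius reciprocity again:
$$[(\alpha_i)^G, \chi] = [\alpha_i, \chi_H] = e_i,$$
so $(\alpha_i)^G$ contains $e_i$ copies of $\chi$, giving $(\alpha_i)^G(1) \geq e_i \chi(1)$. But $(\alpha_i)^G(1) = [G:H]\alpha_i(1) = [G:H]\alpha(1) = \chi(1)$, which forces $e_i \leq 1$, hence $e_i = 1$. Consequently $k = [G:H]$ and the $\alpha_i$ are pairwise distinct and orthonormal in $H$.

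Finally, I would compute norms on both sides. On the one hand
$$\frac{1}{|H|}\sum_{h \in H}|\chi(h)|^2 = [\chi_H,\chi_H]_H = \sum_{i=1}^k e_i^2 = k = [G:H],$$
so $\sum_{h \in H}|\chi(h)|^2 = |G|$. On the other hand, since $\chi$ is irreducible,
$$\sum_{g \in G}|\chi(g)|^2 = |G|\cdot[\chi,\chi]_G = |G|.$$
Subtracting gives $\sum_{g \in G - H}|\chi(g)|^2 = 0$, hence $\chi(g) = 0$ for every $g \in G - H$, as claimed.

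The only genuinely delicate step is the second one, where the equal-degree hypothesis has to be combined with the identity $\alpha^G = \chi$ to upgrade "$e_1 = 1$" (immediate from Frobenius reciprocity for $\alpha$ itself) to "$e_i = 1$ for every $i$"; once this uniform multiplicity-one statement is in hand, the vanishing is a short orthogonality computation.
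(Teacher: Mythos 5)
Your proof is correct and follows essentially the same route as the paper: count the constituents of $\chi_H$ via the equal-degree hypothesis to get $[\chi_H,\chi_H]=|G:H|$, then compare $|H|[\chi_H,\chi_H]$ with $|G|[\chi,\chi]$ to force vanishing off $H$. Your middle step establishing that each $e_i=1$ is correct but not needed: the paper simply uses $[\chi_H,\chi_H]=\sum e_i^2\ge\sum e_i=|G:H|$, since an inequality already suffices for the orthogonality comparison.
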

\medskip

\begin{proof}
By hypothesis, $\chi_H$ is the sum of
$\chi(1)/\alpha(1) = |G:H|$ irreducible characters, and thus
$[\chi_H,\chi_H] \ge |G:H|$. Then
$|H|[\chi_H,\chi_H] \ge |G|[\chi,\chi]$, and so $\chi$ vanishes on
$G - H$, as claimed. 
\end{proof}
\medskip

Next is the proof of Riese's theorem (by M. Isaacs).
\medskip

\begin{thm} \label{rie}
Let $A \sbs G$, where $A$ is abelian, and
assume that $\lambda^G$ is irreducible, where
$\lambda \in \irr A$. Then $A \nor\nor G$.
\end{thm}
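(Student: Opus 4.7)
My plan is to induct on $|G|$, reducing via the normal closure of $A$ and using the Howlett--Isaacs theorem from the introduction in the residual case. First, I apply Lemma 6.1 with $H=A$ and $\alpha=\lambda$: since $A$ is abelian, every irreducible constituent of $\chi_A$ (where $\chi:=\lambda^G$) is linear and hence of degree $\lambda(1)$, so Lemma 6.1 yields that $\chi$ vanishes on $G\setminus A$. Being a class function, $\chi$ in fact vanishes on $G\setminus N$, where $N:=\bigcap_{g\in G}A^g\nor G$ is the normal core of $A$.

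Next I induct on $|G|$. The case $A=G$ is trivial. Set $B:=A^G\nor G$. If $B<G$, the irreducibility of $\lambda^G=(\lambda^B)^G$ forces $\lambda^B$ to be irreducible---any proper decomposition of $\lambda^B$ would yield more than one irreducible constituent of $\lambda^G$. Applying the theorem inductively to $(B,A,\lambda)$ gives $A\nor\nor B$, and combined with $B\nor G$ we obtain $A\nor\nor G$.

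It remains to rule out $A^G=G$ with $A<G$. Since $A$ is abelian with $N\sbs A$, we have $A\sbs\cent G N$; but $\cent G N\nor G$ and $A^G=G$ together force $\cent G N=G$, so $N\sbs\zent G$. Hence $\mu:=\lambda|_N$ is automatically $G$-invariant. The vanishing of $\chi$ off $N$ combined with $N\sbs\zent G$ (so $\rho(n)$ is scalar and $|\chi(n)|=\chi(1)$ for $n\in N$) and the orthogonality $\sum_g|\chi(g)|^2=|G|$ yield $\chi(1)^2=|G:N|$. A standard Clifford summation $\sum_{\psi\in\irr{G|\mu}}\psi(1)^2=|G:N|$ (valid because $\mu$ is $G$-invariant and linear) then identifies $\chi$ as the unique irreducible constituent of $\mu^G$, so $\mu$ is fully ramified in $G/N$, and the Howlett--Isaacs theorem gives that $G/N$ is solvable.

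The main obstacle is the final structural step: setting $H:=G/N$, $B:=A/N$, and $m:=\chi(1)=|G:A|$, one has $H$ solvable of order $m^2$, $B$ abelian of order $m$, $B_H=1$, and $B^H=H$, and one must deduce $m=1$, contradicting $A<G$. The natural attack is to take a minimal normal subgroup $M\nor H$ (elementary abelian by solvability of $H$), observe that $M\not\sbs B$ (otherwise $M\sbs B_H=1$), and then exploit the semidirect structure of $MB$, the faithful action $B\hookrightarrow\Aut(M)$ (since any kernel of $B$ on $M$ would be normal in $H$ and hence lie in $B_H=1$), together with the faithful transitive action of $H$ on the $m$-point set $H/B$ embedding $H$ into the symmetric group $S_m$; the divisibility $m^2\mid m!$ rules out small $m$ outright, and a careful comparison of $|B|$ with orders of abelian subgroups of $\Aut(M)$ in the various subcases ($MB=H$ versus $MB<H$) should force the contradiction.
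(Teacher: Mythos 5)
The first two-thirds of your argument is sound: the application of Lemma 6.1, the passage from $A$ to the core $N$, the reduction via $B=A^G$, the observation that $A^G=G$ forces $N\sbs\zent G$, and the counting that shows $\lambda_N$ is fully ramified in $G/N$ are all correct. But the proof is not complete: you explicitly leave the decisive step as ``the main obstacle,'' and the sketch you offer does not close it. The configuration you must eliminate --- $H$ solvable of order $m^2$, $B\sbs H$ abelian and core-free of order $m$ with $B^H=H$ --- is not visibly self-contradictory by the tools you name: the divisibility $m^2\mid m!$ from the embedding $H\hookrightarrow S_m$ holds for all $m\ge 5$ and so rules out nothing beyond tiny cases, and the ``careful comparison with abelian subgroups of $\Aut(M)$'' is never carried out (nor is it clear it can be, since you have discarded the character-theoretic data --- the linear $\lambda$ on $A$ extending the fully ramified $\mu$ --- that distinguishes your situation from a bare group-theoretic one). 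As it stands this is a genuine gap, not a routine verification.

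For comparison, the paper finishes quite differently and more cheaply, without Howlett--Isaacs (so without the Classification). Assuming $A$ is not subnormal, the inductive hypothesis applies to every proper subgroup containing $A$, so Wielandt's zipper lemma gives a unique maximal subgroup $M\supseteq A$; since $A^G=G$, some $A^g\not\sbs M$, whence $\langle A,A^g\rangle=G$. Both $A$ and $A^g$ being abelian, $A\cap A^g$ is central, and the subgroup $V$ generated by the support of $\chi$ satisfies $Z=V=A\cap A^g$ where $Z=\zent G$. Then $\chi$ is fully ramified over $Z$, so $|G:A|^2=|G:Z|$, i.e.\ $|G:A|=|A:Z|=|A^g:A\cap A^g|$, which forces $AA^g=G$ and hence $g\in AA^g$ decomposes so as to give $A^g=A$ and $A=G$, a contradiction. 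You could rescue your write-up by replacing your final ``structural step'' with exactly this pair-of-conjugates argument: in your residual case pick $g$ with $\langle A,A^g\rangle=G$ (via the zipper lemma) and run the order count on $A\cap A^g=\zent G$ rather than trying to analyze the abstract quotient $H$.
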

\medskip

\begin{proof}
We prove the theorem by induction on $|G|$.
Write $\chi = \lambda^G \in \irr G$, and let $V$ be   the subgroup of $G$ generated by the
elements $g\in G$ with $\chi(g)\ne 0$. 
Since $A$ is abelian, each irreducible
constituent of $\chi_A$ has degree $1 = \lambda(1)$, and thus by
Lemma 6.1, we have $V \sbs A$. Also, writing $Z = \zent G$, we have
$Z \sbs V$.

 If $A \sbs H < G$, then since
$\lambda^H$ is irreducible, the inductive hypothesis yields
$A \nor\nor H$. Assuming that $A$ is not subnormal in $G$, then
Wielandt's zipper
lemma (Theorem 2.9 of \cite{Is4}) guarantees that there is a unique maximal subgroup $M$ of
$G$ with $A \sbs M$. Also, if the normal closure $A^G < G$ then $A \nor\nor A^G \nor G$,
and we are done. We can thus supppose that $A^G = G$, and so
$A^g \not\sbs M$ for some element $g \in G$. By the uniqueness of
$M$, therefore, we have $<A, A^g> = G$. But $V \nor G$ and
$V \sbs A$, and thus $V \sbs A \cap A^g \sbs Z$, and we have
$V = Z = A \cap A^g$. Thus $\chi$ vanishes off $Z$, and so $\chi$ is
fully ramified with respect to $Z$. In particular
$|G:A|^2 = \chi(1)^2 = |G:Z|$, and we have $|G:A| = |A:Z|$. Thus
$|G:A| = |A^g:A^g \cap A|$, and it follows that $AA^g = G$. This
implies that $A = A^g$, and thus $A = G$. This is a contradiction
since $A$ was assumed to be not subnormal.
\end{proof}
\medskip

\begin{cor}
Let $\theta \in \irr N$, where
$N \nor G$ and $\theta$ is $G$-invariant. Let $N \sbs A \sbs G$,
where $A/N$ is abelian, and suppose that $\theta$ has an
extension $\phi \in \irr A$ such that $\phi^G$ is irreducible. Then
$A \nor\nor G$.
\end{cor}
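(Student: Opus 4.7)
The plan is to reduce to Riese's Theorem~\ref{rie} via character triple isomorphisms. Since $\theta$ is $G$-invariant, $(G,N,\theta)$ is a character triple, and by Chapter~11 of [Is] we can pass to an isomorphic triple $(G^{*},N^{*},\theta^{*})$ in which $N^{*}\sbs \zent{G^{*}}$ is cyclic and $\theta^{*}$ is faithful and linear. Under the resulting isomorphism, the subgroup $A$ corresponds to a subgroup $A^{*}$ with $N^{*}\sbs A^{*}\sbs G^{*}$ and $A^{*}/N^{*}\cong A/N$, while $\phi$ corresponds to a character $\phi^{*}\in\irr{A^{*}|\theta^{*}}$ of degree $\phi(1)\theta^{*}(1)/\theta(1)=1$. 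Since such isomorphisms preserve induction and irreducibility, $(\phi^{*})^{G^{*}}$ is irreducible; and since subnormality of a subgroup containing $N$ (resp.\ $N^{*}$) depends only on the corresponding subgroup of $G/N\cong G^{*}/N^{*}$, we have $A\nor\nor G$ if and only if $A^{*}\nor\nor G^{*}$.

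The key observation is that, after this reduction, $A^{*}$ is actually abelian. Indeed, $A^{*}/N^{*}$ abelian gives $[A^{*},A^{*}]\sbs N^{*}$, while linearity of $\phi^{*}$ gives $[A^{*},A^{*}]\sbs \ker{\phi^{*}}$. Since $\phi^{*}$ restricts to $\theta^{*}$ on $N^{*}$ and $\theta^{*}$ is faithful, this yields $[A^{*},A^{*}]\sbs \ker{\phi^{*}}\cap N^{*}=\ker{\theta^{*}}=1$. Thus $\phi^{*}$ is a linear character of the abelian subgroup $A^{*}\sbs G^{*}$ whose induction to $G^{*}$ is irreducible, and Theorem~\ref{rie} applies to give $A^{*}\nor\nor G^{*}$. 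Transporting this conclusion back through the character triple isomorphism yields $A\nor\nor G$, as required.

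I do not foresee a serious obstacle. The only thing that really needs care is checking that the character triple isomorphism framework of Chapter~11 of [Is] transports all the pertinent data---the lattice of intermediate subgroups above $N$, the degree scaling of extensions, the preservation of irreducibility of induced characters, and the correspondence between subnormal subgroups---but these are standard features of the construction. The genuinely creative step is the observation that the combined hypotheses ``$\phi^{*}$ linear'' and ``$\theta^{*}$ faithful'' force $A^{*}$ to be abelian, thereby reducing the general statement to the case already handled by Riese's theorem.
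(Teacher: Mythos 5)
Your proposal is correct and follows essentially the same route as the paper: reduce by a character triple isomorphism to the case where $\theta$ is linear and faithful, observe that then $\phi$ is linear and $A' \sbs N \cap \ker\phi = \ker\theta = 1$, so $A$ is abelian, and conclude by Theorem \ref{rie}. The extra care you take in spelling out what the triple isomorphism preserves is fine but not a departure from the paper's argument.
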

\medskip

\begin{proof}
By character triple isomorphisms (see Chapter 11 of [Is]), we can assume
that $\theta$ is linear and faithful. Then $\phi$ is linear and
$A' \sbs N \cap \ker\phi = \ker\theta = 1$. Then $A$ is abelian, and
since $\phi^G$ is irreducible, Theorem \ref{rie} yields the result.
\end{proof}
\medskip

Now, we prove an extension of Theorem B.  We should mention that
 the Classification of Finite Simple Groups
is implicitely used in the proof. Specifically,  in the Howlett-Isaacs
([HI])
theorem on central type groups.

\begin{thm}
Let $N \nor G$. Suppose that
$\theta \in \irr N$ is $G$-invariant and that $o(\theta)\theta(1)$ is
a $\pi$-number. Assume that $G/N$ is $\pi$-separable and that
$\oh\pi{G/N} = 1$.  Then all members of $\irr{G|\theta}$ have equal
degrees if and only if $G/N$ is an abelian $\pi'$-group.
\end{thm}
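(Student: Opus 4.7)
For the easy direction, if $G/N$ is an abelian $\pi'$-group, the classical coprime-extension theorem produces a $G$-invariant $\hat\theta\in\irr G$ extending $\theta$ (using $\gcd(\theta(1)o(\theta),|G/N|)=1$), and Gallagher's theorem identifies $\irr{G|\theta}$ with $\{\hat\theta\chi:\chi\in\irr{G/N}\}$; abelianness of $G/N$ then forces every character in $\irr{G|\theta}$ to have the common degree $\theta(1)$.

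For the hard direction I would proceed in three steps. Step one: apply character triple isomorphisms (Chapter 11 of [Is]) to reduce to the case where $\theta$ is linear and faithful. Then $N$ is cyclic, $N\le\zent G$, and $|N|=o(\theta)$ is a $\pi$-number, so $N$ is a normal $\pi$-subgroup of $G$; combined with the hypotheses on $G/N$ this forces $G$ itself to be $\pi$-separable and $N=\oh\pi G$ (the latter because $\oh\pi{G/N}=1$ and $N$ is a normal $\pi$-subgroup). This places us exactly in the Theorem~B setting, with the extra feature that $\theta$ is linear and faithful.

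Step two---the central and hardest step---is to show that $G/N$ is a $\pi'$-group. Set $L/N:=\oh{\pi'}{G/N}$, which is nontrivial whenever $G>N$, since any nontrivial $\pi$-separable group with trivial $\pi$-core has nontrivial $\pi'$-core. Coprime extension gives a unique $G$-invariant linear extension $\hat\theta\in\irr L$. Supposing for contradiction that $L<G$, the quotient $G/L$ is a nontrivial $\pi$-separable group with $\oh{\pi'}{G/L}=1$, so $M/L:=\oh\pi{G/L}\ne 1$. Since $\hat\theta$ is $M$-invariant and linear, the triple $(M,L,\hat\theta)$ is a central-type configuration (working modulo $\ker\hat\theta$ if needed to view $L$ as central and cyclic inside $M$). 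Using Clifford theory together with Lemma~\ref{aux} applied to Sylow pre-images inside $M$ for primes $p\in\pi$, the equal-degree hypothesis on $\irr{G|\theta}$ should force $\hat\theta$ to be fully ramified with respect to $M/L$. The Howlett-Isaacs theorem (where CFSG enters) then yields solvability of $M/L$ along with the additional structural control provided by fully ramified behavior, and tracing this structure back through the normal series $N\le L\le M\nor G$ produces a nontrivial normal $\pi$-subgroup of $G/N$, contradicting $\oh\pi{G/N}=1$.

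Step three concludes the argument. Once $L=G$, so $G/N$ is a $\pi'$-group, $\theta$ extends coprimely to $\hat\theta\in\irr G$, Gallagher's theorem identifies $\irr{G|\theta}$ with $\{\hat\theta\chi:\chi\in\irr{G/N}\}$, and the equal-degree hypothesis forces every $\chi\in\irr{G/N}$ to be linear; hence $G/N$ is abelian. The main obstacle throughout is Step two: converting ``equal degrees in $\irr{G|\theta}$'' into ``$\hat\theta$ fully ramified in $M/L$'' precisely enough to invoke Howlett-Isaacs, and then extracting from the resulting structure a normal $\pi$-subgroup of $G/N$ that genuinely contradicts $\oh\pi{G/N}=1$.
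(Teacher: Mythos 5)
Your easy direction and Step three are fine, and Step two correctly identifies part of the shape of the argument (extend $\theta$ to the preimage $U$ of $\oh{\pi'}{G/N}$, aim at full ramification, invoke Howlett--Isaacs). But the claimed final contradiction in Step two is not a contradiction at all. Having $M/L=\oh\pi{G/L}\ne 1$ for $L/N=\oh{\pi'}{G/N}$ is automatic in any $\pi$-separable group with $L<G$, and it is perfectly compatible with $\oh\pi{G/N}=1$: a normal $\pi$-subgroup of $G/L$ does not lift to a normal $\pi$-subgroup of $G/N$, since its preimage contains the $\pi'$-group $L/N$. So ``tracing the structure back through $N\le L\le M$'' produces nothing that conflicts with $\oh\pi{G/N}=1$, and no mechanism is offered for doing so. Relatedly, full ramification of $\hat\theta$ is not something you can extract directly from the equal-degree hypothesis at the level of $(M,L,\hat\theta)$; the paper first has to run an induction on $|G/N|$ showing that the stabilizer $T=G_\phi$ of each $\phi\in\irr{V|\hat\theta}$ (where $V/U=\oh\pi{G/U}$) again satisfies the hypotheses --- this requires verifying $\oh\pi{T/V}=1$ via the fact that $T$ contains the preimage of $\oh{\pi'}{G/V}$ --- and only after concluding $V=G$, so that $G/U$ is a $\pi$-group acting \emph{faithfully} on the linear characters of $U/N$, does a counting argument with the Clifford correspondence and the vanishing Lemma 6.1 yield that $\hat\theta$ is fully ramified in $G/U$.

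The second, equally serious gap is that even granting full ramification, Howlett--Isaacs only delivers solvability of $G/U$, and solvability alone does not contradict anything. The paper's endgame needs three further nontrivial inputs: Dolfi's theorem on coprime actions of solvable groups to find two character stabilizers $T,R$ with $T\cap R=U$, which forces $|T:U|=|R:U|=|G:U|^{1/2}$ and makes $T/U$ and $R/U$ abelian; Riese's subnormality criterion (Theorem 6.2/Corollary 6.3) to place $T/U$ and $R/U$ inside the Fitting subgroup, so that $G/U$ is nilpotent; and Isaacs' large-orbit theorem for nilpotent groups to produce a linear character of $U/N$ whose stabilizer has index exceeding $|G:U|^{1/2}$, contradicting the full-ramification size constraints. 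None of this appears in your sketch, and it is precisely where the actual contradiction lives.
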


\medskip

\begin{proof}
If $G/N$
is an abelian $\pi'$-group, then $\theta$ extends
to $G$ by Corollary 8.16 of [Is], and
we are done by Gallagher's  Corollary 6.17
of [Is]. To
prove the converse, we argue by induction 
 on $|G/N|$ and assume that 
$|G/N| > 1$. We argue first that the common degree $d$ of the
characters in $\irr{G|\theta}$ is a $\pi$-number. To see this, let
$q \in \pi'$ and let $Q/N \in \syl q{G/N}$. Then $\theta$ extends to
$Q$, and the induction to $G$ of such an extension has degree
$\theta(1)|G:Q|$, which is a $q'$-number. Since this degree is a
multiple of $d$, it follows that $d$ is a $q'$-number, and since
$q \in \pi'$ was arbitrary, we see that $d$ is a $\pi$-number.

Let $U/N = \oh{\pi'}{G/N}$ and note that $U > N$. All degrees of
characters in $\irr{U|\theta}$ divide $d$, and so are
$\pi$-numbers. But since $U/N$ is a $\pi'$-number, it follows that
all degrees of characters in $\irr{U|\theta}$ equal $\theta(1)$, and
so all of these characters extend $\theta$. It follows that $U/N$ is
abelian by Gallagher Corollary 6.17
of [Is]. If $U = G$, we are done, and so we suppose that $U < G$
and we let $V/U = \oh\pi{G/U}$. Note that $V > U$. By
Corollary 8.16 of [Is], there exists a unique extension $\hat\theta \in \irr U$
of $\theta$ with determinantal $\pi$-order. By uniqueness,
$\hat\theta $ is $G$-invariant. Now, let $\phi \in  \irr{V|\hat\theta }$.
Since $V/U$ is a $\pi$-group, $\phi_U$
is a multiple of $\hat\theta $  and $o(\hat\theta )$ is a $\pi$-number,
we easily have that $o(\phi)$ is a $\pi$-number. 
Write $T = G_\phi$ for the stabilizer of $\phi$
in $G$. Then all
members of $\irr{T|\phi}$ induce irreducibly to $G$, yielding
characters of degree $d$, and thus these characters all have
degree $d/|G:T|$. We claim that $T$ satisfies the hypotheses of
the theorem with respect to the character $\phi$ and the normal
subgroup $V$. To see this, we need to check that
$\oh\pi{T/V}$ is trivial.

Let $W/V = \oh{\pi'}{G/V}$. We argue that $W$ stabilizes
$\phi$. This is because the $G/V$-orbit of $\phi$ has size
dividing $d$, and so is a $\pi$-number, and $W/V$ is a normal
$\pi'$-subgroup of $G/V$. Thus $W \sbs T$ and $\oh\pi{T/V}$
centralizes the normal $\pi'$-subgroup $W/V = \oh{\pi'}{G/V}$.
But $\oh\pi{G/V}$ is trivial, and Lemma~1.2.3 applies to show
that $\oh\pi{T/V} = 1$, as wanted.

By the inductive hypothesis, we conclude that $T/V$ is a
$\pi'$-group. Also, by the Clifford correspondence, $|G:T|$ divides
$d$, which we know is a $\pi$-number. Thus $T/V$ is a full Hall
$\pi'$-subgroup of $G/V$. Also, $\phi$ extends to $T$, and so
$\phi(1) = d/|G:T| = d/|G/V|_{\pi}$ is constant for
$\phi \in \irr{V|\theta}$. It follows that the hypotheses are
satisfied in the group $V$ with respect to $\theta$. If $V < G$,
the inductive hypothesis yields that $V/N$ is a $\pi'$-group, and
  this is a contradiction.

It follows that   $V = G$ and $G/U$ is a
$\pi$-group. Also, $G/U$ acts faithfully on $U/N$ because
$\oh\pi{G/N}$ is trivial. Now  let $\lambda \in \irr{U/N}$, so that $\lambda$ is linear. Let
$S = G_\lambda$, and note that $\lambda$ extends to $S$ since
$S/U$ is a $\pi$-group. Write $a = |G:S|$.

Note that $S$ is the stabilizer of $\lambda\hat\theta$ in $G$, and
thus all characters in $\irr{S|\lambda\hat\theta}$ have degree
$d/a$. If $r$ is the number of such characters, this yields
$r(d/a)^2 = |S:U|\theta(1)^2$. Also, since $\lambda$ extends to
$S$, by Theorem 
6.16 of [Is]
there is a degree-preserving bijection between
$\irr{S|\lambda\hat\theta}$ and $\irr{S|\hat\theta}$, and hence the
latter set contains exactly $r$ characters, and each has degree
$d/a$. Each of these must therefore induce irreducibly to $G$, and
it follows that each member of $\irr{G|\hat\theta}$ is induced from
a member of $\irr{S|\hat\theta}$.

 Note that the number of
different members of $\irr{S|\hat\theta}$ that can have the same
induction to $G$ is at most $|G:S| = a$.

Now let $t = |\irr{G|\hat\theta}|$ so that $td^2 = |G:U|\theta(1)$.
If we divide this equation by our previous one, we get
$ta^2/r = |G:S| = a$, and so $t = r/a$. It follows that each of the
$t$ members of $\irr{G|\hat\theta}$ is induced from exactly $a$
characters in $\irr{S|\hat\theta}$. In other words, if
$\chi\in\irr{G|\hat\theta}$, then $\chi_S$ has exactly $a$ distinct
irreducible constituents, each with degree $d/a$, and so by
Lemma 6.1, it follows that $\chi$ vanishes on $G-S$. In other words,
the only elements of $G$ on which $\chi$ can have a nonzero value
lie in the stabilizer of $\lambda$ for every linear character
$\lambda$ of $U/N$. But $G/U$ acts faithfully on this set of linear
chararacters, and thus $\chi$ vanishes on $G-U$. In other words,
$\hat\theta$ is fully ramified in $G$. It follows that
$d = \theta(1)|G:U|^{1/2}$. 

Also, $a\theta(1)$ divides $d$, and so $a$ must divide
$|G:U|^{1/2}$. Write $s = |S:U|$, so that $as = |G:U|$. Then
$a^2$ divides $as$, and thus $a$ divides $s$. In particular, we have
$a \le s$, so $|G:S| \le |S:U|$.
Thus 
$$|G:U|=|G:S||S:U| \le |S:U|^2 \, .$$

Now, by the Howlett-Isaacs theorem we have that $G/U$ is solvable. This
group acts faithfully on the group of linear characters of $U/N$,
and so by the main result in [D], there exist character stabilizers $T$ and $R$ such
that $T \cap R = U$. By the result of the previous paragraph, each
of $T/U$ and $R/U$ has order at least $|G:U|^{1/2}$.
Now
$$|G:U|=|G:T||T:U|\ge |R:U||T:U|\ge |G:U| \, .$$ Then
$TR = G$, and that each of $|T:U|$ and $|R:U|$ has order
$|G:U|^{1/2}$. Therefore all characters in $\irr{T|\hat\theta}$ are
extensions of $\hat\theta$ and induce irreducibly to $G$. In
particular, $T/U$ is abelian, and similarly $R/U$ is abelian.

By Corollary 6.3, it follows that $R \nor\nor G$, and since $R/U$ is
abelian, $R/U \sbs {\bf F}(G/U)$. Similarly, $T/U \sbs {\bf F}(G/U)$ and
thus $G/U$ is nilpotent. But then, since $G/U$ acts faithfully on
the group of linear characters of $U/N$, it follows that if $G/U$ is
nontrivial, then some character $\lambda \in \irr{U/N}$ has a
stabilizer $S$ in $G$  such that $$|S:U| < |G:U|^{1/2}$$ by Theorem B of [Is3]. But then
$|G:U|=|G:S||S:U|\le |S:U|^2<|G:U|$.
This contradiction completes the proof.
\end{proof}

\medskip

\noindent
{\bf Acknowledgment.}~~We thank the referee for a very careful reading of this manuscript.
  \medskip

\medskip

\end{document}